\newtheorem{theorem}{Theorem}
\newtheorem{lemma}[theorem]{Lemma}
\newtheorem{proposition}[theorem]{Proposition}
\newtheorem{corollary}[theorem]{Corollary} 
\newtheorem{remark}[theorem]{Remark}
\numberwithin{equation}{section}
\numberwithin{theorem}{section}
\newcommand \Id {\mathrm{Id}}
\newcommand \p {\partial}
\newcommand \lra {\longrightarrow}
\newcommand \R {\mathbb{R}}
\newcommand \N {\mathbb{N}}
\newcommand \C {\mathbb{C}}
\newcommand \Sph {\mathbb{S}}
\newcommand \Vol {\mathrm{Vol}}
\renewcommand{\hat}[1]{\widehat{#1}}
\newcommand{\Rmnum}[1]{\expandafter\@slowromancap\romannumeral #1@}
\newcommand{\la}{\langle}
\newcommand{\ra}{\rangle}
\newcommand{\lp}{\left(}
\newcommand{\rp}{\right)}
\newcommand{\set}[1]{\left\{ #1 \right\} }
\newcommand\paperintro%
\newcommand\paperbody%
\DeclareMathOperator \Tr {Tr}
\DeclareMathOperator \supp {supp}
\title{Equidistribution of phase shifts in obstacle scattering}
\author{Jesse Gell-Redman}
\address{School of Mathematics and Statistics, University of Melbourne}
\email{jgell@unimelb.edu.au}
\author{Maxime Ingremeau}
\address{Institut de Recherche Math\'ematique Avanc\'ee, Universit\'e de Strasbourg}
\email{maxime.ingremeau@math.unistra.fr}
\begin{document}

\begin{abstract}
For scattering off a smooth, strictly convex obstacle
$\Omega \subset \mathbb{R}^d$ with positive curvature, we show that the
eigenvalues of the scattering matrix -- the phase shifts -- equidistribute on the unit
circle as the frequency $k \to \infty$ at a rate proportional to
$k^{d - 1}$, under a standard condition on the set of closed
orbits of the billiard map in the interior.  Indeed, in any sector $S
\subset \mathbb{S}^1$ not containing $1$, there are $c_d |S| \Vol(\p
\Omega)\ k^{d - 1} + o(k^{d-1})$ eigenvalues for $k$ large, where $c_d$ is a
constant depending only on the dimension. Using this result, the two term
asymptotic expansion for the counting function of Dirichlet
eigenvalues, and a spectral-duality result of Eckmann-Pillet, we then
give an alternative proof of the two term asymptotic of the
total scattering phase due to Majda-Ralston \cite{MajdaRalston1978}.
\end{abstract}

\maketitle

\section{Introduction}

Let $\Omega \subset \mathbb{R}^{d}$ denote a strictly convex domain
whose boundary $\p \Omega$ is smooth and has positive sectional curvature. We shall write $\Omega^c := \R^d\backslash \Omega$.
It is well-known (see for instance \cite[\S 5]{GST} or \cite[\S4.4]{dyatlovmathematical}) that for any $k>0$ and any $\phi_{in}\in C^\infty (\Sph^{d-1})$, there is a
unique solution $u\in C^\infty(\overline{\Omega^c})$ to the Dirichlet problem
$$
(\Delta + k^2)u = 0 \qquad u \rvert_{\p \Omega} = 0
$$
such that
\begin{equation}\label{eq:defscattering}
u(x)=|x|^{-(d-1)/2}\big{(} e^{-i k|x|} \phi_{in}(-\hat{x}) +
e^{i k |x|} \phi_{out}(\hat{x}) \big{)} + O_{|x|\rightarrow \infty}(|x|^{-(d+1)/2}),
\end{equation}
where we write $\hat{x}= \frac{x}{|x|}\in \Sph^{d-1}$ and $\Delta =
\sum_{i = 1}^d \p_{x_i}^2$.  In particular $\phi_{out}$ is determined
by $\phi_{in}$ and we define the \emph{scattering matrix} $S(k)$, which depends on $k$ and
$\Omega$, by $$S(k)(\phi_{in}) := e^{i\pi (d-1)/2} \phi_{out}.$$
$S(k)$ extends to a unitary operator acting on
$L^2(\Sph^{d-1})$ with the property that $S(k) - \Id$ is trace class
\cite{taylor:vol2, RSIII}. Therefore, for any $k>0$, $S(k)$ has purely
discrete spectrum, accumulating only at 1, which we denote by
$\sigma(S(k)) := \{e^{i
  \beta_{k,n}} \}$. Our aim in this paper is to study
the asymptotic distribution of the $e^{i \beta_{k,n}}$ as
$k\rightarrow \infty$.

Our main result is an estimate for the number of phase shifts in a
sector $S \subset \mathbb{S}^1 \setminus \{ 1 \}$ as $k \to \infty$.
Define the counting function
$$
N_k(\phi_0, \phi_1, \Omega) = N_k(\phi_0, \phi_1) := \# \{ e^{i
  \beta_{k,n}} \in \sigma(S(k))  : \phi_0 <
\beta_{k, n} < \phi_1,\ \mathrm{mod}\ 2\pi \}.
$$
where the eigenvalues are counted according to multiplicity.
Letting $\omega_{d-1} = |B^{d-1}|$ where $B^{d-1}$ is the
unit ball in $\mathbb{R}^{d-1}$, we will prove
\begin{equation}
    \label{eq:number}
 N_{k}(\phi_{0}, \phi_{1}) = \frac{\omega_{d-1}}{(2\pi)^{d-1}} \Big{(}\frac{\phi_{1} - \phi_{0}}{2\pi} \Big{)}
 \Vol(\p \Omega) k^{d-1} + o(k^{d-1}).
  \end{equation}
In particular, the phase shifts accumulate in each sector $S$ at a rate
proportional to $k^{d-1}$ as $k \to \infty$ times $\Vol(\p \Omega)
|S|$.  The estimate
in \eqref{eq:number} follows from Theorem \ref{th: main theorem}, see
Section \ref{sec:proofs}.

To study
the asymptotic distribution of the phase shifts, consider the measure $\mu_{k}$ on the circle $\mathbb{S}^{1}$,
defined for continuous functions $f \colon \mathbb{S}^{1} \lra \mathbb{C}$ by
\begin{equation}
  \label{eq:measure}
  \la \mu_{k}, f \ra = \Big{(} \frac{2\pi}{k}\Big{)}^{d - 1} \sum_{\sigma(S(k))}
  f(e^{i \beta_{k,n}}).
\end{equation}
Note that $ \la \mu_{k}, f \ra$ is finite if $1 \not \in \supp f$. The following theorem describes the behavior $\mu_k$ as $k\rightarrow \infty$, provided (\ref{eq: hypVol2}) holds, which is a standard assumption on the volume of the periodic points of the inside billiard map. Note that this assumption holds if our smooth convex obstacle, is generic (see the discussion at the end of Section \ref{sec:dynamics}).
\begin{theorem}\label{th: main theorem}
Let $\Omega\subset \R^d$ be a smooth strictly convex open set with positive sectional curvature, such that (\ref{eq: hypVol2}) holds. Then  for any $f
  \colon \mathbb{S}^{1} \lra \mathbb{C}$ with $\supp f \cap \set{1} =
  \varnothing$, we have
  \begin{equation}
    \label{eq:limit_measure}
\lim \limits_{k\rightarrow \infty}  \la \mu_{k}, f \ra =
\frac{\Vol(\p \Omega) \omega_{d-1}}{2\pi}    \int_{0}^{2\pi}f(e^{i\theta}) d\theta
  \end{equation}
\end{theorem}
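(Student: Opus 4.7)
\emph{Step 1: Reduction to trigonometric moments.} I would first reduce the statement to computing $\Tr\bigl(S(k)^m - \Id\bigr)$ for each $m \in \mathbb{Z}\setminus\{0\}$. Indeed, by Stone--Weierstrass any continuous $f$ on $\mathbb{S}^{1}$ vanishing in a neighborhood of $z=1$ is uniformly approximated by finite combinations $\sum_m c_m(z^m-1)$, $m\neq 0$. Combined with the a priori trace-class estimate $\|S(k)-\Id\|_1 = O(k^{d-1})$---standard for strictly convex obstacles---one obtains $\sum_n |1-e^{i\beta_{k,n}}|^p = O(k^{d-1})$ for all $p\geq 1$, which lets the approximation be commuted with $k\to\infty$. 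Since $\frac{\Vol(\p\Omega)\omega_{d-1}}{2\pi}\int_0^{2\pi}(e^{im\theta}-1)\,d\theta = -\Vol(\p\Omega)\omega_{d-1}$ for $m\neq 0$, the theorem reduces to
\[
\lim_{k\to\infty}\Big(\tfrac{2\pi}{k}\Big)^{d-1}\Tr\bigl(S(k)^m - \Id\bigr) = -\Vol(\p\Omega)\,\omega_{d-1},\qquad m\in\mathbb{Z}\setminus\{0\}.
\]

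\emph{Step 2: Semiclassical structure of $S(k)$.} For the trace asymptotic, I would invoke the Majda--Melrose--Taylor semiclassical parametrix for the scattering matrix on a smooth strictly convex obstacle. With semiclassical parameter $h = k^{-1}$, away from the glancing set in $T^{*}\Sph^{d-1}$, $S(k)$ is microlocally a semiclassical Fourier integral operator whose canonical relation is the classical exterior scattering map $\kappa \colon T^{*}\Sph^{d-1}\to T^{*}\Sph^{d-1}$, sending the incoming asymptotic datum of a bicharacteristic to its outgoing datum after a single specular reflection off $\p\Omega$. Thus $S(k)^{m}$ microlocally quantizes $\kappa^m$, with explicit principal symbol.

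\emph{Step 3: Trace via stationary phase and non-trapping.} Writing $\Tr\bigl(S(k)^{m}-\Id\bigr)$ as an integral of the parametrix kernel along the diagonal and applying stationary phase, the stationary points correspond to fixed points of $\kappa^{m}$. Strict convexity makes the exterior billiard non-trapping, so $\kappa^{m}$ has no interior fixed points for $m\neq 0$; the only critical-set contribution corresponds to rays missing $\Omega$ entirely, and this piece is exactly cancelled by subtracting $\Id$. What remains is a contribution from the boundary of the domain of $\kappa^{m}$ at the glancing hypersurface, whose principal-symbol evaluation produces
\[
\Tr\bigl(S(k)^{m} - \Id\bigr) = -\Vol(\p\Omega)\,\omega_{d-1}\Big(\tfrac{k}{2\pi}\Big)^{d-1} + o(k^{d-1}),
\]
with $\omega_{d-1}$ arising as the volume of the unit ball in the momentum variables transverse to $\p\Omega$ and $\Vol(\p\Omega)$ as the integral over the boundary; the $m$-independence of the leading constant reflects the uniform rotational symmetry of the limiting measure on $\mathbb{S}^{1}$.

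\emph{Main obstacle.} The technical heart lies in Step 3, specifically in the glancing/diffractive region where the smooth FIO parametrix degenerates and the Melrose--Taylor diffractive description must be used. The dynamical hypothesis (\ref{eq: hypVol3}) enters here: creeping rays along $\p\Omega$ evolve under a flow on the boundary closely linked to the interior billiard, and the measure-zero assumption on periodic orbits is exactly what is needed to keep the oscillatory integrals from the diffractive expansion at the required $o(k^{d-1})$ level (analogous to Ivrii's sharp remainder in the two-term Weyl law). A secondary technical point is $m$-uniformity of all the stationary-phase estimates, which is needed to justify the Stone--Weierstrass exchange performed in Step 1.
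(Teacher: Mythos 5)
Your Step 1 reduction is essentially right, and the uniform-in-$k$ bound needed to commute the polynomial approximation with $k\to\infty$ does follow along the lines you indicate: since $f$ vanishes identically near $z=1$, the function $f(z)/(z-1)$ is continuous, so one approximates it uniformly by trigonometric polynomials $Q_n$ and sets $P_n=(z-1)Q_n$; then $|\la\mu_k, f-P_n\ra|\le \|(f-P_n)/(z-1)\|_\infty \cdot (2\pi/k)^{d-1}\|S(k)-\Id\|_1 \to 0$ uniformly in $k$. (The paper instead works in weighted spaces $C^0_\alpha$ with a logarithmic weight and a sharp eigenvalue-decay bound for $S(k)-\Id$, but the two routes serve the same purpose.) Once framed this way, the $m$-uniformity issue you flag at the end is not actually needed.

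The substantive gap is in Step 3. First, it is false that strict convexity forces $\kappa^m$ to be fixed-point free in $\mathcal{I}$ for $m\ne 0$. Fixed points of $\kappa^p$ inside $\mathcal{I}$ correspond, via the elementary lemma of Section 2, to $p$-periodic orbits of the \emph{interior} billiard map $\kappa_{int}$, and every smooth convex domain has infinitely many such orbits (for instance, the period-$2$ diameter orbits). Non-trapping of the exterior billiard is not relevant here: each iteration of $\kappa$ treats the previous outgoing direction as a fresh incoming direction, and the iterated dynamics is conjugate to the interior billiard, not the exterior one. This is exactly what hypothesis (\ref{eq: hypVol3}) controls, and it enters \emph{not} through the diffractive/glancing expansion (as your ``Main obstacle'' paragraph suggests) but to bound the measure of $(\omega,\eta)\in\mathcal{I}$ that are $k^{-1/3}$-almost periodic under $\kappa^n$, $1\le n<p$, which is what dominates the off-diagonal stationary-phase contributions. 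Second, the leading term $-\Vol(\p\Omega)\omega_{d-1}(k/2\pi)^{d-1}$ is a \emph{bulk} contribution from the interior of $\mathcal{I}$, not a boundary contribution from the glancing hypersurface: over $\mathcal{I}$ the canonical relation of $S(k)^m$ avoids the diagonal, so the on-diagonal part of $S(k)^m-\Id$ is simply $-\Id$, whose trace localized to $\mathcal{I}$ gives $-\Vol(\mathcal{I})(k/2\pi)^{d-1}=-\Vol(\p\Omega)\omega_{d-1}(k/2\pi)^{d-1}$. In the Kirchhoff-integral computation this is realized as the stationary-phase contribution of the ``identity'' branch $\sigma_0$ of the critical set, integrated over the bulk of $\mathcal{I}$ away from glancing; the glancing region contributes only to the error, controlled by the Melrose--Taylor bound and the near-invariance of glancing under $\kappa$, independently of (\ref{eq: hypVol3}). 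A boundary-term heuristic on the $(2d-3)$-dimensional glancing hypersurface would not produce the correct $k^{d-1}$ prefactor carrying a bulk symplectic volume.
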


\begin{remark}
  The factor in front of the integral in \eqref{eq:limit_measure} arises as the volume of the
  `interacting region' in phase space of incoming rays from the sphere
  at infinity that make contact with the obstacle.  See Section
  \ref{sec:dynamics} for further description of the classical dynamics.  In
  \cite{MR3335243}, in which the first author and collaborators
  studied the same problem for semiclassical potential scattering,
  they defined a measure $\mu_h$, depending on a semiclassical
  parameter $h \to 0$, analogously to the measure in
  \eqref{eq:measure} except they included the volume of the
  interacting region.  Here we prefer not to, so that the dependence
  on the interacting region appears explicitly in the limit measure.
\end{remark}

As an application of the equidistribution of the measure $\mu_k$, we
will give an alternative proof of the following result of
Majda-Ralston, generalized by Melrose and then by Robert,
regarding the asymptotic development of the total scattering phase
\begin{equation}
  \label{eq:scattering-phase}
  s(k) = i \log \det S(k).
\end{equation}
The scattering phase $s(k)$  can be defined in a natural way so
that $s(k) \in C^{\infty}((0, \infty))$.
\begin{theorem}[\cite{MajdaRalston1978, MelroseWeylExterior, Robert1996}]\label{thm:scattering-phase}
Let $\Omega$ be a smoothly bounded, strictly convex obstacle whose set of periodic
billiard trajectories has measure zero.  Then
\begin{equation}
  \label{eq:scattering-phase-asymptotics}
  s(k) = \frac{\omega_d}{(2\pi)^{d-1}} \Vol(\Omega) k^d +
  \frac{\omega_{d - 1}}{4 (2\pi)^{d-2}}  \Vol(\p \Omega) k^{d -1} + o(k^{d-1}).
\end{equation}
 \end{theorem}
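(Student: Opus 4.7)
The proof plan combines the three ingredients flagged in the abstract: Theorem \ref{th: main theorem} (equidistribution of phase shifts), the Ivrii two-term Weyl law for the interior Dirichlet Laplacian on $\Omega$, and the Eckmann-Pillet spectral duality. The Eckmann-Pillet duality identifies $s(k)$, on a natural branch of $i\log\det$, with $2\pi N_D(k)$ plus an explicit remainder $r(k)$ expressible as a sum over the phase shifts of $S(k)$ with a specific weight on $\Sph^1$; here $N_D(k)$ is the counting function for the Dirichlet eigenvalues of $-\Delta$ on $\Omega$. Morally, $N_D(k)$ contributes the $k^d$ asymptotic while the equidistribution of phase shifts supplies the $k^{d-1}$ refinement.

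The execution is then as follows. First, invoke Eckmann-Pillet to write $s(k) = 2\pi N_D(k) + r(k)$. Second, insert the two-term Weyl law of Ivrii, valid here under the same hypothesis on periodic billiard trajectories used in Theorem \ref{th: main theorem},
$$N_D(k) = \frac{\omega_d}{(2\pi)^d}\Vol(\Omega) k^d - \frac{\omega_{d-1}}{4(2\pi)^{d-1}}\Vol(\p\Omega) k^{d-1} + o(k^{d-1}),$$
which produces the leading term of \eqref{eq:scattering-phase-asymptotics} and contributes $-\frac{\omega_{d-1}}{2(2\pi)^{d-2}}\Vol(\p\Omega)k^{d-1}$ at subleading order. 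Third, compute $r(k)$ via Theorem \ref{th: main theorem}: write the remainder as $\sum_n g(e^{i\beta_{k,n}})$ for the appropriate weight $g$ on $\Sph^1$ coming from Eckmann-Pillet, approximate $g$ by continuous functions vanishing near $1$, and identify the $k^{d-1}$ asymptotic of $r(k)$ as an integral of $g$ against the equilibrium density in \eqref{eq:limit_measure}. The two $k^{d-1}$ contributions then combine to give the stated coefficient $\frac{\omega_{d-1}}{4(2\pi)^{d-2}}\Vol(\p\Omega)$.

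The main technical obstacle lies in the third step. The natural weight $g$ (essentially the argument function, or a closely related lift, on $\Sph^1 \setminus \{1\}$) is discontinuous at $1$, whereas Theorem \ref{th: main theorem} applies only to continuous test functions vanishing near $1$. Since infinitely many phase shifts $e^{i\beta_{k,n}}$ accumulate at $1$ as $k\to\infty$, one cannot simply truncate and throw away their contribution. The core technical task is therefore to show that the contribution to $r(k)$ from phase shifts in an $\varepsilon$-neighborhood of $1$ is $o(k^{d-1})$ uniformly in $k$ as $\varepsilon\to 0$. This uniform control on small phase shifts does not follow from Theorem \ref{th: main theorem} itself; it should come from a separate input, such as the trace-class bound $S(k) - \Id \in \mathcal{L}^1$ together with known off-diagonal decay estimates for the Schwartz kernel of $S(k)$ in strictly convex obstacle scattering.
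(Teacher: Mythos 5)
Your plan is essentially the paper's proof: the same Eckmann--Pillet decomposition $s(k) = 2\pi N_D(k) + s_{2\pi}(k)$ with $s_{2\pi}(k) = -\sum_n \beta_{k,n}$ (branches taken in $(-2\pi,0]$), the same use of Ivrii's two-term Weyl law, and the same extraction of the $k^{d-1}$ asymptotic of $s_{2\pi}$ from the equidistribution result. The technical obstacle you flag -- the weight is discontinuous at $1$ and the contribution from phase shifts accumulating there must be shown to be uniformly $o(k^{d-1})$ -- is precisely what the paper addresses, and the ``separate input'' you anticipate is exactly the Christiansen--Zworski bound \eqref{eq:christ}, $|e^{i\beta_{k,n}}-1| \le C k^d \exp(Ck - n^{1/(d-1)}/C)$, already converted into the counting estimate $N_{L,k} \le C_0(Lk)^{d-1}$ in Lemma \ref{lem : bound big shifts}. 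The paper's execution partitions $\Sph^1$ into $M$ sectors, applies \eqref{eq:number} to the $M-1$ sectors away from $1$, and uses Lemma \ref{lem : bound big shifts} with a dyadic shell decomposition on the sector adjacent to $1$ to get $|a_{M,k}(0)| \le Ck^{d-1}/M$; sending $M \to \infty$ then gives the claim -- operationally the same as your plan of approximating the weight by continuous functions vanishing near $1$. One small arithmetic slip in your write-up: $2\pi N_D(k)$ contributes $-\frac{\omega_{d-1}}{4(2\pi)^{d-2}}\Vol(\p\Omega)k^{d-1}$ (not $-\frac{\omega_{d-1}}{2(2\pi)^{d-2}}$) at subleading order, so the reduced phase must supply $+\frac{\omega_{d-1}}{2(2\pi)^{d-2}}\Vol(\p\Omega)k^{d-1}$.
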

In fact, Theorem \cite{MelroseWeylExterior,Robert1996} holds for
\textit{all smoothly bounded, compact domains} satisfying the stated
assumption on the periodic trajectories.

As we describe in Section \ref{sec:proofs}, the novelty in our proof
comes from its use of the explicit relationship between 
the counting function for the Dirichlet eigenvalues,
\begin{equation}
  \label{eq:counting-function}
N_D(\lambda_0) := \# \{ 0 < \lambda < \lambda_0 :  \exists \phi \in L^2(\Omega),\ \phi\rvert_{\p \Omega} = 0,\ \Delta \phi
= - \lambda^2 \phi, \phi \neq 0 \}. 
\end{equation}
and the scattering phase which arises from the spectral duality result of
Eckmann-Pillet \cite{EP1995}.  Indeed, note that the leading order
term in \eqref{eq:scattering-phase-asymptotics} is $2\pi$ times the
leading order term in Weyl's law \cite{ivrii1980second}, which is to
be expected since, as explained in Section \ref{sec:proofs},
`inside-outside' duality says that a phase shift makes a complete
rotation of the unit circle for each Dirichlet eigenvalue of
$\Omega$. 

\medskip

\subsubsection*{Relation to other works}
Since the pioneering works of
Birman, Sobolev, and Yafaev (see for example
\cite{SY1985, BY1984}), there has been a wealth of literature on the asymptotic behavior of the
scattering matrix at high energy, in particular about the distribution
of phase shifts.  In semi-classical potential scattering, an analogous result for compactly supported
potentials was proven by the first author, Hassell, and Zelditch in
\cite{MR3335243} for non-trapping potentials, and was generalized to
trapping potentials by the second author in \cite{I2016}.  See \cite{MR3335243} for a complete literature review of phase
shift asymptotics for potential scattering. The behaviour of the phase shifts in the semi-classical limit has been studied in various settings: for magnetic potentials (\cite{BP2012}), for scattering by radially symmetric potentials, in
\cite{DGHH2013}, near resonant energies in \cite{nakamura2014spectrum}...  

The idea of using trace formulae to analyze the
asymptotics of the spectra comes from \cite{Z1992,Z1997}, and was the starting point of \cite{MR3335243}, \cite{I2016} and of the present paper. The main tool we use here is the Kirchhoff approximation, which was proven in its optimal form in \cite{Melrose-Taylor-near-peak}. Finally, our proof is simplified by describing the micro-local properties of the scattering matrix in terms of its action on Gaussian states, an approach which was introduced in \cite{I2016} for potential scattering.

There do exist perturbations of the free Hamiltonian for which \emph{the
phase shifts do not equidistribute.}  Indeed, for Schr\"odinger
operators of the form $\Delta + V$ where $V \sim v_0(\hat x) / | x
|^\alpha$, $\alpha > d$, the first author and Hassell showed in
\cite{GRH2015} that an appropriatly rescaled spectral measure for
$S_h$ converges to the pushforward via the map $\mathbb{R} \lra
\mathbb{R} / 2 \pi \mathbb{Z} = \mathbb{S}^1$ of a homogeneous measure
$$
\nu = \left\{
  \begin{array}{cc}
c_1 \theta^{-(\gamma + 1)} & \mbox{ for } \theta > 0 \\
c_2 |\theta|^{-(\gamma + 1)} & \mbox{ for } \theta < 0
  \end{array}
\right.,
$$
on $\mathbb{R}$, where $\gamma = (d-1)/(\alpha - 1)$ and the $c_1,
c_2$ are determied by $v_0$.  It would be interesting to know if there
are circumstances under which equidistribution fails in the setting of
obstacle scattering.

\subsubsection*{Organisation of the paper}
In Section \ref{sec:dynamics}, we will recall a few facts about the
classical scattering dynamics, and its links with the interior
billiard dynamics. In Section \ref{sec: tools}, we will recall the
main tools we use to prove Theorem \ref{th: main theorem}. In Section \ref{sec:ProofProposition}, we prove Theorem \ref{th: main theorem} in the special case when $f$ is a polynomial vanishing at one. Finally, we prove
Theorems \ref{th: main theorem} and \ref{thm:scattering-phase} in
Section \ref{sec:proofs}.  The appendix contain rather elementary facts of
semiclassical analysis, and a proof of a resolution of identity formula on
the sphere.

\subsubsection*{Acknowledgements}  J.G.R. acknowledges the support of
the Australian Research Council through Discovery Grant DP180100589.
M.I. was funded by the LabEx IRMIA, and partially supported by the
Agence Nationale de la Recherche project GeRaSic (ANR-13-BS01-0007-01).
Both authors wish to thank the Australian Mathematical Sciences
Institute and the Mathematical Sciences Institute at the Australian
National University for their partial funding of the workshop
``Microlocal Analysis and its Applications in Spectral Theory,
Dynamical Systems, Inverse Problems, and PDE'' at which part this
project was completed.

\section{Classical scattering dynamics and interior dynamics}\label{sec:dynamics}
Let $\omega\in \Sph^{d-1}$ and $\eta\in \omega^\perp\subset \R^d$. We will always identify $(\omega,\eta)$ with a point in $T^*\Sph^{d-1}$.
Consider the line $L_{(\omega,\eta)}:=\{t\omega +\eta, t\in \R\}$. By strict convexity of $\partial \Omega$, it intersects $\partial \Omega$ in zero, one or two points. 
We define the interaction region,
\begin{equation}\label{eq: def I}
\mathcal{I}:= \{ (\omega,\eta)\in T^*\Sph^{d-1} ; L_{(\omega,\eta)}\cap \partial \Omega \text{ contains two points} \}.
\end{equation}
If $(\omega,\eta)\in \mathcal{I}$, then there exists $t_-<t_+$ such that $t_\pm\omega+\eta \in \partial \Omega$. We then set (see Figure \ref{fig: kappa})
\begin{figure}
    \center
\includegraphics[scale=0.4]{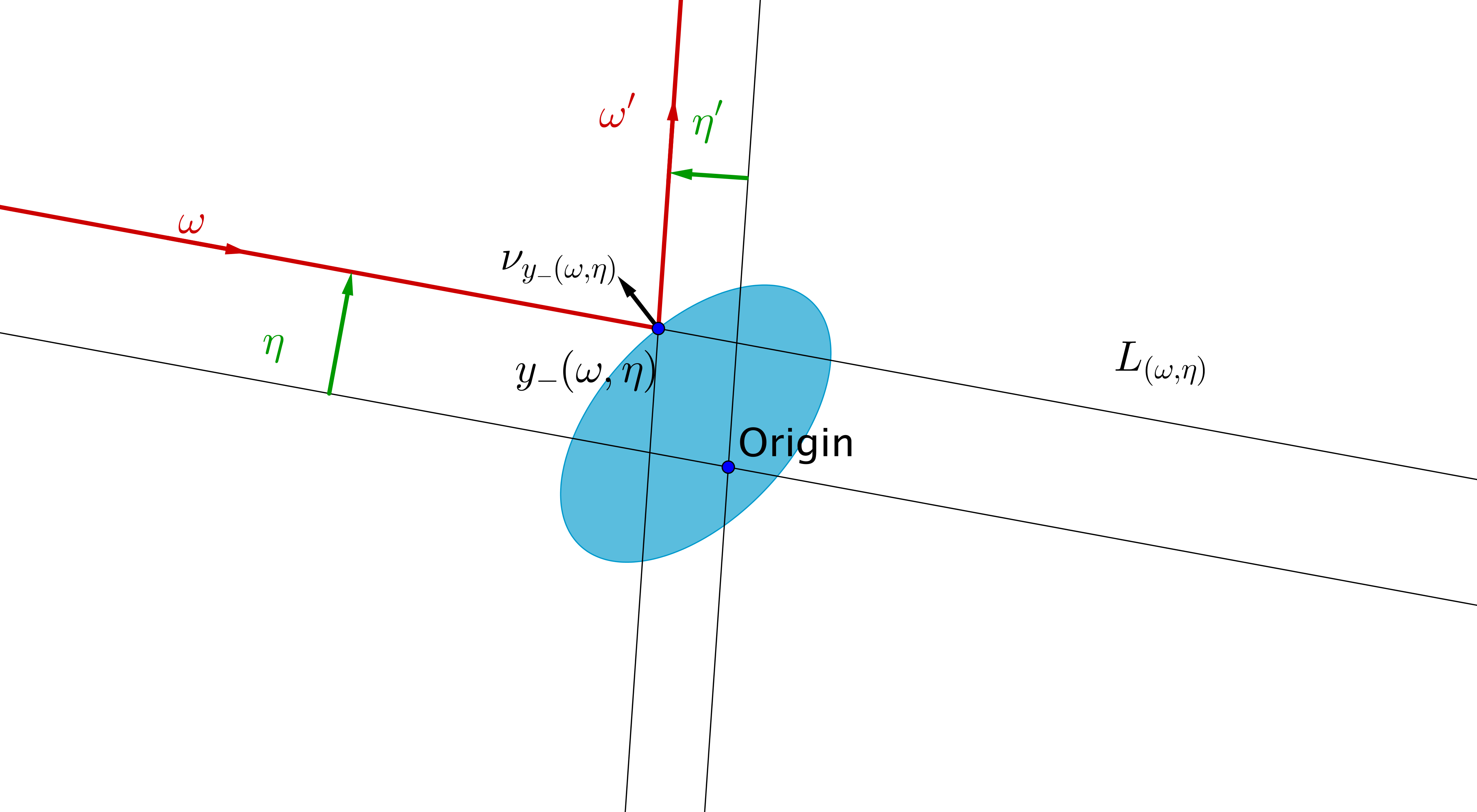}
    \caption{The construction of the scattering map $\kappa$.}\label{fig: kappa}
\end{figure}
\begin{equation*}
\begin{aligned}
y_\pm(\omega,\eta)&:= t_\pm\omega+\eta\in \partial \Omega\\
\omega'(\omega,\eta)&:= \omega- 2 (\omega\cdot \nu_{y_-(\omega,\eta)}) \nu_{y_-(\omega,\eta)}\\
\eta'(\omega,\eta) &:= y_-(\omega,\eta) - (\omega'\cdot x(\omega,\eta)) \omega',
\end{aligned}
\end{equation*}
where $\nu_y$ is the outward pointing normal vector at the point $y \in
\p \Omega$.
We then set
\begin{equation}
\kappa(\omega,\eta) = (\omega',\eta').\label{eq:the-scattering-map}
\end{equation}
If $(\omega,\eta)\notin \mathcal{I}$, we shall set $\kappa(\omega,\eta) = (\omega,\eta)$.
The map $\kappa$ may then be seen as a $C^0$ map $\kappa :
T^*\Sph^{d-1} \rightarrow T^*\Sph^{d-1}$, which is smooth (and even
symplectic) away from the glancing set $\partial \Omega$. 

Using Cauchy's surface area formula, it is straightforward that
\begin{equation}\label{eq:CauchyFormula}
\Vol(\mathcal{I}) = \Vol(\p \Omega) \omega_{d-1}.
\end{equation}

For $p\in
\mathbb{Z}\backslash\{0\}$, we will denote by $\mathcal{P}_p\subset
T^*\Sph^{d-1}$ the set of fixed points of $\kappa^p$. 
Note that we then have
$$\mathcal{I}= T^*\Sph^{d-1}\backslash \mathcal{P}_1,$$
and that
$\partial \mathcal{P}_1= \partial \mathcal{I}$ is exactly the `glancing set', i.e.\ the set of $(
\omega, \eta)$ such that $L_{(\omega,\eta)}\cap \partial \Omega$
consists of a single point.  We define
\begin{equation}
  \label{eq:non-trivial-periodic}
  \mathcal{P}_p' := \mathcal{P}_p  \setminus \mathcal{P}_1,
\end{equation}
the set of non-trivial glancing periodic points with period $p$, also an invariant subset.

The sets $\mathcal{P}'_p$ will play a central role in our proof, and
can be better understood in terms of the periodic points of the
interior billiard map, as follows. 
Consider the set $\mathcal{O}:=\{(y,\xi)\in S^*\partial \Omega;~ \xi\cdot \nu_y<0\}$. If $(y,\xi)\in \mathcal{O}$, there will be a unique $t>0$ such that $y+t\xi\in \partial \Omega$. We shall then write $y'(y,\xi)= y+ t\xi$, and $\xi'(y,\xi) = \xi - 2 (\xi\cdot \nu_{y'}) \nu_{y'}$.
 We have $(x',\xi')\in \mathcal{O}$, and we may define $\kappa_{int} : \mathcal{O}\rightarrow \mathcal{O}$ by
$\kappa_{int}(x,\xi) = (x',\xi')$. The map $\kappa_{int}$, and we
shall denote by $\mathcal{P}_p^{int}$ the set of periodic points of
period $p$ of $\kappa_{int}$. 

The following elementary lemma makes explicit the link between $\kappa$ and $\kappa_{int}$, as can be seen on Figure \ref{fig: billiard}.
\begin{lemma}
Let $(\omega,\eta)\in T^*\Sph^{d-1}\backslash \mathcal{P}_1$. We then have $$\kappa_{int} \big{(}(y_-(\omega,\eta), - \omega\big{)} = \big{(} y_-(\kappa(\omega,\eta), \omega'(\kappa(\omega,\eta)\big{)}.$$
\end{lemma}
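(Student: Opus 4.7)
The proof is essentially a bookkeeping exercise hinged on a single geometric observation: the line $L_{(\omega',\eta')}$ associated with $\kappa(\omega,\eta) = (\omega',\eta')$ passes through the original reflection point $x(\omega,\eta)$. This is immediate from the definition of $\eta'$, since $\eta' = x(\omega,\eta) - (\omega'\cdot x(\omega,\eta))\,\omega'$ rearranges to $x(\omega,\eta) = (\omega'\cdot x(\omega,\eta))\,\omega' + \eta' \in L_{(\omega',\eta')}$. The assumption $(\omega,\eta) \notin \mathcal{P}_1$ guarantees that $(\omega',\eta') \in \mathcal{I}$ as well, so $L_{(\omega',\eta')}$ meets $\p\Omega$ in exactly two points; one of them is $x(\omega,\eta)$ by the above, and by definition the other is $x(\omega',\eta') = x(\kappa(\omega,\eta))$.

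With this in hand, my plan for the first coordinate is to note that the chord of $\Omega$ cut out by $L_{(\omega',\eta')}$ is precisely the segment from $x(\omega,\eta)$ to $x(\kappa(\omega,\eta))$. The interior billiard trajectory emanating from $x(\omega,\eta)$ along this chord, in the inward direction (which by strict convexity of $\Omega$ is opposite to the outward-pointing reflected velocity $\omega'$), therefore crosses $\Omega$ in a straight line and next strikes $\p\Omega$ at $x(\kappa(\omega,\eta))$. This yields the first coordinate of the claimed identity directly.

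For the second coordinate, I would simply apply the interior billiard reflection formula $\xi'(y,\xi) = \xi - 2(\xi\cdot\nu_{y'})\nu_{y'}$ at $y' = x(\kappa(\omega,\eta))$, and compare with the definition of $\omega'(\kappa(\omega,\eta))$ as the reflection of $\omega'$ across $\nu_{x(\omega',\eta')} = \nu_{y'}$. Since both are reflections of a vector parallel to $\omega'$ across the same normal at the same boundary point, they agree up to the orientation convention relating the interior billiard direction to the exterior reflected ray direction. There is no substantive obstacle here; the only care required is tracking the sign conventions, which reduces to confirming that the direction prescribed by the lemma at $x(\omega,\eta)$ indeed lies in $\mathcal{O}$, i.e.\ points into $\Omega$. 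Once the two apparent chords of $\Omega$ in the figure are recognised as a single chord, the rest of the proof is immediate from the definitions.
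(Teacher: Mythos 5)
The paper gives no written argument for this lemma --- it labels it as ``elementary'' and points to Figure 2 --- so there is no in-paper proof to compare against. Your geometric skeleton is the right one: the observation that $\eta' = x(\omega,\eta) - (\omega'\cdot x(\omega,\eta))\omega'$ forces $x(\omega,\eta)\in L_{(\omega',\eta')}$, so that the billiard chord from $x(\omega,\eta)$ to $x(\kappa(\omega,\eta))$ is exactly the chord cut out by $L_{(\omega',\eta')}$, is presumably what the figure depicts. One small omission: to conclude that the \emph{other} intersection of $L_{(\omega',\eta')}$ with $\p\Omega$ is $x(\omega',\eta')$, you must rule out $x(\omega,\eta) = x(\omega',\eta')$. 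This holds because $\omega'\cdot\nu_{x(\omega,\eta)} = -\omega\cdot\nu_{x(\omega,\eta)} > 0$, so $x(\omega,\eta)$ sits at the larger parameter value $t_2'$ along $L_{(\omega',\eta')}$, whereas by definition $x(\omega',\eta')$ sits at $t_1'$.

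The more serious problem is the step you defer to the end, ``confirming that the direction prescribed by the lemma at $x(\omega,\eta)$ indeed lies in $\mathcal{O}$.'' If you actually carry out that check, it fails for the statement as printed: since $x(\omega,\eta) = t_1\omega + \eta$ with $t_1 < t_2$, we have $\omega\cdot\nu_{x(\omega,\eta)} < 0$, so $-\omega$ points \emph{out} of $\Omega$ and $\big(x(\omega,\eta),-\omega\big)\notin\mathcal{O}$. The inward-pointing direction --- the one your middle paragraph actually uses when it walks along the chord --- is $-\omega'(\omega,\eta)$. And the interior billiard reflection of $-\omega'$ at the far endpoint $x(\kappa(\omega,\eta))$ (with normal $\nu$ there) is $-\big(\omega' - 2(\omega'\cdot\nu)\nu\big) = -\omega'(\kappa(\omega,\eta))$, not $+\omega'(\kappa(\omega,\eta))$. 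So the argument you outline, carried through carefully, proves $\kappa_{int}\big(x(\omega,\eta), -\omega'(\omega,\eta)\big) = \big(x(\kappa(\omega,\eta)), -\omega'(\kappa(\omega,\eta))\big)$, which is the conjugacy one actually wants and still yields $\mathcal{P}'_p\cong\mathcal{P}_p^{int}$; the printed lemma appears to contain typographical slips (the unbalanced parentheses in the source already hint at this). Waving at ``orientation conventions'' hides a step that, taken literally, is false; a complete write-up should identify and correct those signs rather than assume they resolve themselves.
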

\begin{figure}
    \center
\includegraphics[scale=0.4]{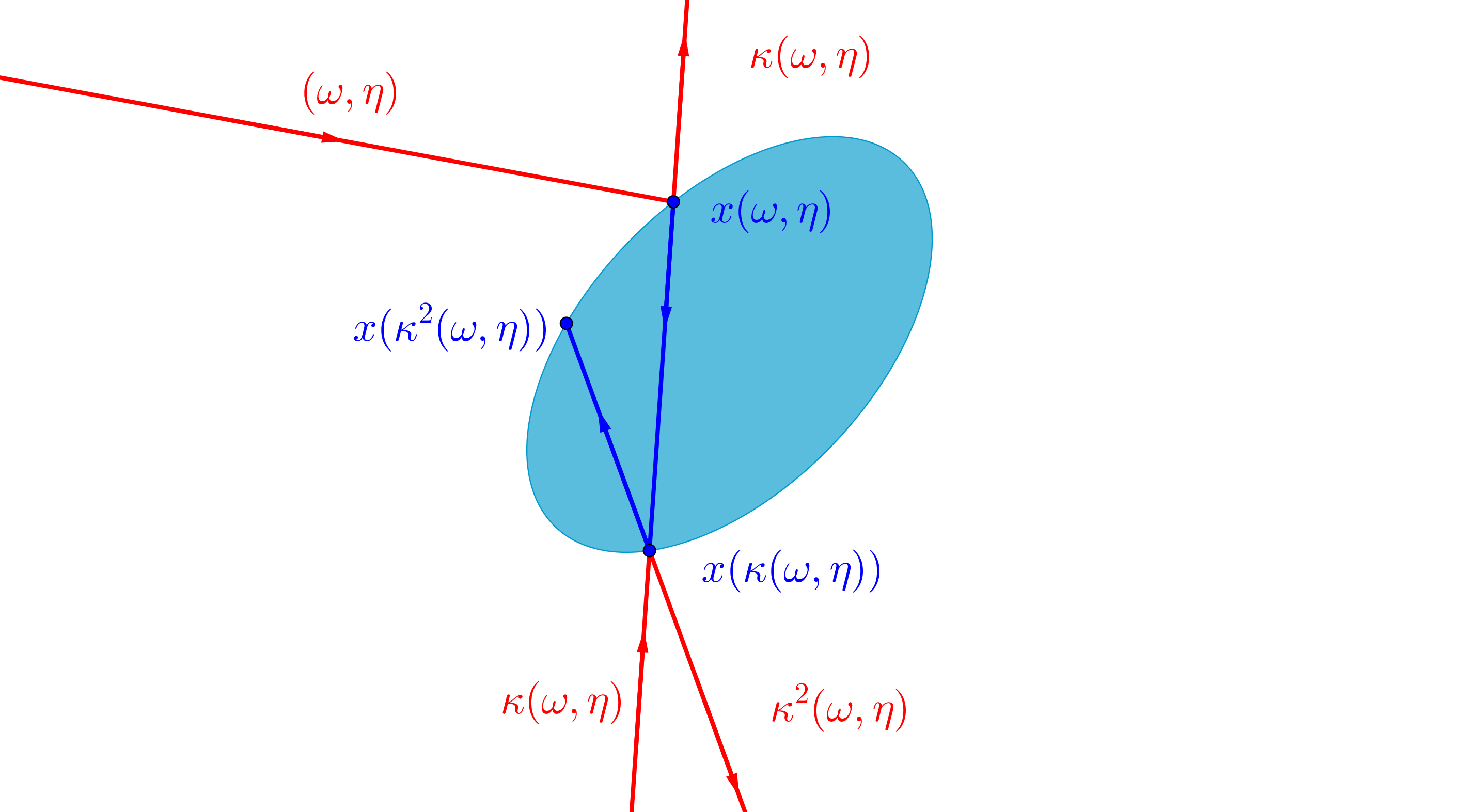}
    \caption{The scattering map and the billiard map.}\label{fig: billiard}
\end{figure}
As a consequence of this lemma, we see that $\mathcal{P}'_p$ is homeomorphic to $\mathcal{P}_p^{int}$.

\subsection*{The volume of the set of fixed points}

Let us denote by $\Vol$ the (symplectic) volume on $T^*\Sph^{d-1}$. We will always assume that we have
\begin{equation}\label{eq: hypVol}
\forall p\in \mathbb{Z}\backslash \{0\}, ~~ \Vol (\mathcal{P}'_p) = 0.
\end{equation}

This condition may of course be rephrased in terms of the dynamics of $\kappa_{int}$ on $\mathcal{O}$. If $\mu_\mathcal{O}$ is any Riemannian volume and $d_\mathcal{O}$ is any Riemannian distance on the manifold $\mathcal{O}$,
Equation (\ref{eq: hypVol}) is equivalent to
\begin{equation}\label{eq: hypVol2}
\forall p\in \mathbb{Z}\backslash \{0\},~~  \mu (\mathcal{P}^{int}_p) = 0,
\end{equation}

Condition (\ref{eq: hypVol2}) is conjectured to hold for all domains $\Omega\subset \R^d$, not necessarily convex. This conjecture, known as Ivrii's conjecture, has implications in terms of remainders for the Weyl's law for the eigenvalues of the Laplacian (see \cite{ivrii1980second}).
In the generic case, it was shown in 
\cite{petkov1988number} that $\mathcal{P}^{int}_p$ is finite for all $p\in \mathbb{Z}\backslash\{0\}$, so that (\ref{eq: hypVol2}) holds.
If the manifold $\partial \mathcal{O}$ is analytic, then the map $\kappa_{int}$ will be analytic, and we can show that (\ref{eq: hypVol2}) will hold (see for instance \cite{safarov1997asymptotic}).

\section{Tools for the proof of Proposition \ref{thm:trace-lemma}}\label{sec: tools}
Before proving Proposition \ref{thm:trace-lemma}, let us recall a few facts we will need in the proof.
\subsection{An integral representation for the scattering amplitude} 
The operator \begin{equation}
  \label{eq:scattering-matrix-and-amplitude}
  A(k) := S(k) - \Id.
\end{equation} can also be defined as follows.
Let $v(\cdot; \xi,k)$ be the unique solutions to
\begin{equation}
  \label{eq:dirichletexterior}
  \begin{split}
    (\Delta + k^{2}) v &= 0  \\
    v \rvert_{\p \Omega} &= -e^{i x \cdot \xi},
  \end{split}
\end{equation}
satisfying the Sommerfeld radiation condition.

$v$ may then be written as
$$v(|x|\omega;\xi,k) = |x|^{-(d-1)/2}e^{i k|x|} a(\omega,\xi,k) + O(|x|^{-(d+1)/2}).$$

One can show (see for instance \cite{HR1976}, page 381) that $A(k)$ is given by an integral kernel 
\begin{equation}
  \label{eq:big-a}
A(k)f(\omega)= \int_{\mathbb{S}^{d-1}} a(\omega, \theta, k) f(\theta) dVol_{\mathbb{S}^{d-1}}(\theta),
\end{equation}
where $a$ satisfies
\begin{align}
  \label{eq:little-a}
  a(\omega, \theta, k) &= \frac{1}{2i} k^{d-2} (2\pi)^{1-d} \int_{\p \Omega} e^{ik \omega
    \cdot y} \frac{\p}{\p \nu} e^{-ik\theta \cdot y} dVol_{\p
    \Omega}(y) \\
 &\qquad + \frac{1}{2i} k^{d-2} (2\pi)^{1-d} \int_{\p \Omega} e^{-ik \theta
    \cdot y} \frac{\p}{\p \nu} v(y, -k\omega) dVol_{\p
    \Omega}(y).
\end{align}

\subsection{The Kirchhoff approximation}
The function $\p_\nu v$ was studied in \cite{Melrose-Taylor-near-peak}, where the authors write 
\begin{equation}
  \label{eq:K-definition}
  \frac{\p v(x, -k\omega)}{\p \nu}  = K(\omega, x, k) e^{i k x \cdot \omega}.
\end{equation}
The main result we need from them can be summed up as follows. (The definition of the symbol classes $S_\delta$ is recalled in Appendix \ref{sec:Appendix}.)

\begin{theorem}[Melrose-Taylor, \cite{Melrose-Taylor-near-peak}]\label{th: meltay}
$$
K(\omega,x,k) = - i k | \nu_x \cdot \omega| + k E(\omega,x,k),
$$
where $\nu_x$ is the outward pointing normal vector at the point $x \in
\p \Omega$, and where $E$ satisfies
\begin{equation}\label{eq:symbolE}
E\in k^{-1/3} S_{1/3}(\Sph^{d-1}\times \partial \Omega).
\end{equation}
\end{theorem}

In particular, we have
\begin{equation}\label{eq:boundE}
||E(\cdot,\cdot,k) ||_{C^0}\leq C k^{-1/3}.
\end{equation}
We therefore have
\begin{equation}\label{eq : Kirchhoff amplitude}
\begin{aligned}
a(\omega, \theta, k) &= -\frac{1}{2} \Big{(}\frac{k}{2\pi}\Big{)}^{d-1} \int_{\p \Omega}\Big{(} e^{i k
  (\omega - \theta) \cdot y} ( - \nu_y \cdot \theta + |\nu_y \cdot
\omega| + E(\omega,y,k))\Big{)} dVol_{\p \Omega}(y).
\end{aligned}
\end{equation}

\subsection{The use of Gaussian states}
From now on, we fix a smooth compactly supported function 
$$\chi : [0,\infty)\rightarrow [0,1]$$ taking value $1$ in a neighborhood of $0$.
Let $(\omega_0,\eta_0) \in T^*\Sph^{d-1}$. We set
$$\phi_{\omega_0,\eta_0}(\omega;k):= \Big{(}\frac{k}{2\pi}\Big{)}^{(d-1)/4} \chi \big{(} k^{1/4}|\omega-\omega_0|\big{)} e^{- ik  \eta_0 \cdot(\omega-\omega_0)}   e^{-\frac{k}{2} |\omega-\omega_0|^2}.$$
Note that $\|\phi_{\omega_0,\eta_0}\|_{L^2} = 1+O(h^\infty)$. The term  $\chi \big{(} k^{1/4}|\omega-\omega_0|\big{)}$ is not very important here, and we could replace it by $ \chi \big{(} k^{p}|\omega-\omega_0|\big{)}$ for any $p\in (0, 1/2)$. It is here only to ensure that the integrals in (\ref{resolution}) and  (\ref{eq:trace-gaussian-states}) below makes sense.

The following lemma, whose proof can be found in \cite[\S 4]{ingremeau2016semi}, says that any function can be decomposed along the $\phi_{\omega_0,\eta_0}$ through a resolution of identity formula.

\begin{lemma}\label{lemma: resolution3}
1) Let $f\in L^2(\Sph^{d-1})$. We have
\begin{equation}\label{resolution}
f(\omega) = c_k \int_{T^* \Sph^{d-1}}\mathrm{d}\omega_0 \mathrm{d}\eta_0  \phi_{\omega_0,\eta_0}(\omega) \int_{\Sph^{d-1}} \mathrm{d}\omega'  \overline{ \phi_{\omega_0,\eta_0}(\omega')} f(\omega'),
\end{equation}
where $c_k$ is a parameter depending on $k$, with 
\begin{equation*}c_k= 2^{-(d-1)/2}\big{(}\frac{k}{2\pi} \big{)}^{d-1} + O_{k\rightarrow \infty}(k^{d-3/2}).
\end{equation*}

2) Let  $T \in \mathcal{L}(L^2(\mathbb{S}^{d-1}))$ a trace
class operator, then
\begin{equation}
\Tr (T) = c_k \int_{T^*\Sph^{d-1}}d\omega_0 d\eta_0 \langle \phi_{\omega_0,\eta_0}, T \phi_{\omega_0,\eta_0}\rangle_{L^2(\Sph^{d-1})}.
\label{eq:trace-gaussian-states}
\end{equation}
\end{lemma}

Let $\mathrm{d}$ be some Riemannian distance on $T^*\Sph^{d-1}$.
For any $\varepsilon>0$, the set 
\begin{equation}\label{eq: def C alpha}
G_\varepsilon=\{(\omega,\eta) \in
T^*\Sph^{d-1} ; \mathrm{d}\big{(}(\omega,\eta), \partial \mathcal{P}_1\big{)} < \varepsilon\}
\end{equation}
 has volume
$O(\varepsilon)$. Therefore, since $\| S(k) \|_{L^2 \lra L^2} = 1$, we have
\begin{equation}
  \label{eq:alph-bound}
I_\varepsilon (k) := c_k \int_{G_\varepsilon} d\omega_0 d\eta_0 \langle
\phi_{\omega_0,\eta_0}, (S_k^p-Id)
\phi_{\omega_0,\eta_0}\rangle_{L^2(\Sph^{d-1})} = O (\varepsilon k^{d-1}).
\end{equation}

\subsection{The action of the scattering matrix on a Gaussian state}
The main tool we use in the proof of Theorem \ref{th: main theorem} is the following proposition, which describes the action of the scattering matrix on a Gaussian state
\begin{proposition}\label{prop:PropagGaussian}
Let $(\omega_0,\eta_0)\in T^*\Sph^{d-1}\backslash G_{\varepsilon_0}$.
Let us write $(\omega_1,\eta_1):= \kappa (\omega_0,\eta_0)$. We have

\begin{equation}\label{eq:PropagGaussDecompo}
\begin{aligned}
&S_k \phi_{\omega_0,\eta_0}(\theta) = A_1(\theta; \omega_0,\eta_0,k) + A_2(\theta; \omega_0,\eta_0,k) \\
&\|A_2(\cdot; \omega_0,\eta_0,k)\|_{L^2}=O(k^{-1/3})\\
&A_1(\theta; \omega_0,\eta_0,k)= a_1(\theta; \omega_0;\eta_0) e^{ik\Phi_1(\theta; \omega_0,\eta_0)},
\end{aligned}
\end{equation}
where 
\begin{equation}\label{eq:SymbolAfterPropag}
a_1\in k^{(d-1)/4} S_{0},
\end{equation}
$\Phi_1 (\cdot ; \omega_0,\eta_0) \in C^\infty(\Sph^{d-1}; \C)$
 has a non-negative imaginary part vanishing only at $\omega_1$, and
 $\partial\Phi_1(\omega_1; \omega_0,\eta_0)=\eta_1$, and $\partial^2
 \Phi_1(\omega_1; \omega_0,\eta_0)$ has a positive definite imaginary
 part.
\end{proposition}

\begin{proof}
Thanks to (\ref{eq : Kirchhoff amplitude}, we have

\begin{equation}\label{eq:AlmostDoneWritingThisPaper}
\begin{aligned}
\big{(}S_k-Id\big{)} \phi_{\omega_0,\eta_0}(\theta) = &-\frac{1}{2} \Big{(}\frac{k}{2\pi}\Big{)}^{5(d-1)/4} \int_{\Sph^{d-1}}  \int_{\p \Omega}  \chi \big{(} k^{1/4}|\omega-\omega_0|\big{)} e^{-ik  \eta_0 \cdot \omega}   e^{-\frac{k}{2} |\omega-\omega_0|^2}\\
&\times \Big{(} e^{i k
  (\omega - \theta) \cdot y} ( - \nu_y \cdot \theta + |\nu_y \cdot
\omega| + E(\omega,y,k))\Big{)} dVol_{\p \Omega}(y) \mathrm{d}\omega.
\end{aligned}
\end{equation}

This is an oscillatory integral with a phase
\begin{equation}\label{eq:DefPhi}
\Phi(\omega,y; \theta) = -\eta_0 \cdot \omega + \frac{i}{2} |\omega-\omega_0|^2 +  (\omega - \theta) \cdot y.
\end{equation}

Let $y_\pm\in \partial \Omega$ be the points such that $y_\pm \in \eta_0 + \R \omega_0$ and $\pm \nu_{y_\pm} \cdot \omega_0 >0$. Let $\theta_\pm\in \Sph^{d-1}$ be such that $\omega_0-\theta_\pm\in \R \nu_{y_\pm}$ and $\theta_\pm \neq \omega_0$. We also set $\theta_0:= \omega_0$. Note that
\begin{equation}\label{eq:ItissofuntoworkinJessesoffice}
(\omega_1,\eta_1) = (\theta_-, \pi_{\theta_-^\perp} (y_-)).
\end{equation}

The phase $\Phi$ has a vanishing gradient and imaginary part at four points: $(\omega_0, y_+, \theta_0)$, $(\omega_0, y_-,\theta_0)$, $(\omega_0, y_+, \theta_+)$ and $(\omega_0,y_-,\theta_-)$.  

We will show below that the second derivative of $\Phi$ is non-degenerate at these critical points. In particular, this implies thanks to Lemma \ref{lemm: nonstat} that $\big{(}(S_k-Id) \phi_{\omega_0,\eta_0}\big{)}(\theta)= O(k^{-\infty}),$ unless, for some $j\in \{0,+,-\}$, we have $|\theta-\theta_j|= O_\varepsilon(k^{-1/2 + \varepsilon})$.

We shall write 
\begin{equation} \label{eq:lingo}
\begin{split}
\alpha_\pm&:=\alpha_\pm(\omega_0,\eta_0):= \omega_0\cdot \nu_{y_\pm}\\
\delta_\pm (\theta)&:= (\omega_0-\theta)\cdot \nu_{y_\pm},
\end{split}
\end{equation}
so that $\delta_\pm (\theta_\pm) = 2\alpha_\pm$, and $\delta_\pm (\omega_0)= 0$.

Let us take a partition of unity on $\p \Omega$:
$$1 = \chi_+ + \chi_-,$$
with $\chi_\pm\in C^\infty(\p\Omega)$, satisfying $\chi_\pm(y) = 1$ in a neighborhood of $y_\pm$.
We shall consider the two integrals
\[
\begin{aligned}
I_\pm(\theta)  = &-\frac{1}{2} \Big{(}\frac{k}{2\pi}\Big{)}^{5(d-1)/4} \int_{\Sph^{d-1}}  \int_{\p \Omega} \chi_\pm(y) \chi \big{(} k^{1/4}|\omega-\omega_0|\big{)} e^{-ik  \eta_0 \cdot \omega}   e^{-\frac{k}{2} |\omega-\omega_0|^2}\\
&\times \Big{(} e^{i k
  (\omega - \theta) \cdot y} ( - \nu_y \cdot \theta + |\nu_y \cdot
\omega| + E(\omega,y,k))\Big{)} dVol_{\p \Omega}(y) \mathrm{d}\omega.
\end{aligned}
\]

To analyse these integrals, let us introduce more convenient local coordinates.

\subsection*{Local Coordinates}
We shall write the points $y\in \p \Omega$ close to $y_\pm$ as
$$y =  y(u)= y_\pm + u + f_\pm(u),$$
where $u\in T_{y_\pm}\p \Omega$, $f_\pm(u)\in \R \nu_{y_\pm}$, $f_\pm(u) = -\frac{u \cdot M u}{2} \nu_{y_0} + O_{|u|\rightarrow 0}(|u|^3)$. Here, $M$ is the second fundamental form of $\p \Omega$ at $y_{\pm}$, and is therefore a positive definite symmetric matrix.

Similarly, we shall write the points $\omega\in \Sph^{d-1}$ close to $\omega_0$ as
$$\omega = \omega(v)= \omega_0 + v + g(v),$$
where $v\in \omega_0^\perp$, $g(v)\in \R\omega_0$, $g(v) = \frac{|v|^2}{2} \omega_0 + O_{|v|\rightarrow 0}(|v|^3)$.

Finally, for $j=0, +,-$, we shall write the points $\theta\in \Sph^{d-1}$ close to $\theta_j$, as $$\theta = \theta(w)= \theta_j + w + g(w),$$
where $w\in \theta_j^\perp$, $g(w)\in \R\theta_j$, $g(w) = \frac{|w|^2}{2} \theta_j + O_{|w|\rightarrow 0}(|w|^3)$.

In these coordinates, using the fact that $(\theta_j-\omega_0)\cdot u = 0$, we have
\[
\begin{aligned}
\Phi(u,v; \theta) &=  -\eta_0 \cdot (v +g(v)) +  \frac{i}{2}|v+g(v)|^2 + (\omega_0-\theta  + v + g(v)) \cdot (y_\pm + u + f(u)) \\
&= (\omega_0-\theta) \cdot y_{\pm} +(\theta_j-\theta)\cdot u + \delta_\pm (\theta) \frac{u\cdot M u}{2} -\frac{|v|^2}{2} (y_\pm \cdot \omega_0)  + \frac{i}{2} |v|^2 + u \cdot v + O(\max (|u|, |v|)^3).
\end{aligned}
\]

Since we are working away from the glancing set, the space $\omega_0^\perp\cap T_{y_\pm}\p \Omega$ is of dimension $d-2$. By definition of $\theta_j$, we have
\begin{equation*}
\omega_0^\perp\cap T_{y_\pm}\p \Omega = \theta_j^\perp\cap T_{y_\pm}\p \Omega
\end{equation*}
for $j\in \{0,+,-\}$. Starting from an orthonormal basis of $\omega_0^\perp\cap T_{y_\pm}\p \Omega$, we may find an orthonormal basis of $T_{y_\pm}\p \Omega$, an orthonormal basis of $\omega_0^\perp$ and an orthonormal basis of $\theta_j^\perp$ such that, if $u$, $v$ and $w$ are written in the coordinates associated to these bases, we have 
\begin{equation*}
\begin{aligned}
u\cdot v &= (\omega_0 \cdot \nu_{y_\pm})  u_1 v_1 + \sum_{j=2}^{d-1} u_j v_j = \langle u, D_\pm v \rangle_{d-1}\\
u\cdot w &=  \langle u, D_\pm w \rangle_{d-1},
\end{aligned}
\end{equation*}
where, notation as in \eqref{eq:lingo},
\begin{equation}\label{eq:deftildeI}
D_\pm= \begin{pmatrix}
\alpha_\pm & 0 & ... & 0 \\
0 & 1 & ...  &0  \\
& & ... \\
0 & 0 & ... & 1
\end{pmatrix},
\end{equation}
 and where we denote by $\langle \cdot , \cdot \rangle_{n}$ the canonical scalar product on $\R^{n}$.
 
We therefore have
\[
\begin{aligned}
\Phi(u,v; w)
&= (\omega_0-\theta(w)) \cdot y_{\pm} +\langle u, \big{(} D_\pm w
+O(|w|^2) \big{)}\rangle_{d-1} + \frac{1}{2}\left< (u,v),
\mathcal{M}(\theta(w)) \begin{pmatrix}
u\\
v
\end{pmatrix} \right>_{2d-2} \\
&\qquad + O(\max (|u|, |v|)^3),
\end{aligned}
\]

where 
\begin{equation*}
\mathcal{M}(\theta)= \begin{pmatrix}
\delta_\pm (\theta) M & D_\pm\\
D_\pm & (i-y_\pm \cdot \omega_0) Id
\end{pmatrix}.
\end{equation*}
 
We may then write

\[
\begin{aligned}
I_\pm(\theta)  = &-\frac{1}{2} \Big{(}\frac{k}{2\pi}\Big{)}^{5(d-1)/4} \int_{\R^{2(d-1)}} \chi_\pm(y(u)) \chi \big{(} k^{1/4}|v|\big{)} e^{ik \Phi(u,v,\theta)}\\
&\times \Big{(}( - \nu_{y(u)} \cdot \theta + |\nu_{y(u)} \cdot
\omega(v)| + E(\omega(v),y(u),k)) +R(u,v)\Big{)} \mathrm{d}u \mathrm{d}v +O(k^{-\infty}),
\end{aligned}
\]  
where  $R\in S_0$ comes from the Jacobian of the change of coordinates $(u,v)\mapsto (y,\omega)$, and satisfies $R(0,0)=0$.

\subsubsection{Behavior for $\theta$ close to $\omega_0$}

The matrix $\mathcal{M}(\omega_0)$ is invertible, with inverse 
\begin{equation*}
\mathcal{M}(\omega_0)^{-1} = 
\begin{pmatrix}
- ( i -y_\pm \cdot \omega_0) D_\pm^{-2}  & D_\pm^{-1} \\
D_\pm^{-1}  & \boldsymbol{0}_{d-1}
\end{pmatrix},
\end{equation*}
and we have, notation as in \eqref{eq:lingo},
\begin{equation*}\det(\mathcal{M}(\omega_0))= (-1)^{d-1} \alpha_\pm^{2}.
\end{equation*}

For $\theta$ close to $\omega_0$, $\mathcal{M}(\theta)$ is still invertible, with inverse
$$\mathcal{M}(\theta)^{-1} = \mathcal{M}(\omega_0)^{-1} \Big{(}\boldsymbol{1}_{d-1} + O\Big{(} \theta-\omega_0\Big{)}\Big{)}.$$

By applying Lemma \ref{lem:StrangePhiStat}, we obtain that 

\begin{equation*}
\begin{aligned}
I_\pm(\theta) &= \pm\frac{1}{2}
\Big{(}\frac{k}{2\pi}\Big{)}^{5(d-1)/4} e^{ik (\omega_0-\theta)\cdot
  y_{\pm}} e^{\frac{ik}{2}\big{(}(i-y_\pm \cdot \omega_0)
  |\theta-\theta_0|^2 + O(|\theta-\theta_0|^3)\big{)} } \\
&\qquad \times 
\Big{(}\frac{2\pi}{k}\Big{)}^{d-1} \frac{1}{|\alpha_\pm|} \big{(}
-\alpha_\pm + |\alpha_\pm| +O_{k\rightarrow \infty}(k^{-1/3}) \big{)}.
\end{aligned}
\end{equation*}

Recall that we assume in this paragraph that $|\theta-\theta_0| =O_{\varepsilon} \big{(}k^{-1/2+\varepsilon}\big{)}$ for all $\varepsilon>0$.
Noting that $\theta\cdot y_\pm = \theta\cdot \eta_0 + (\omega_0\cdot y_\pm) (\theta\cdot \omega_0)$,  we deduce that

\begin{equation*}
I_\pm(\theta)= \pm \frac{1}{2} \Big{(}\frac{k}{2\pi}\Big{)}^{(d-1)/4}  e^{-ik \theta\cdot \eta_0} e^{- \frac{k}{2}|\theta-\theta_0|^2} \frac{1}{|\alpha_\pm|}  \big{(}-\alpha_\pm + |\alpha_\pm| +O_{k\rightarrow \infty}(k^{-1/3}) \big{)}.
\end{equation*}

Therefore, we have for all $\theta$ close to $\omega_0$.
\begin{equation*}
\big{(}(S_k-Id) \phi_{\omega_0,\eta_0}\big{)}(\theta) = -\phi_{\omega_0,\eta_0}(\theta)\big{(} 1+ O_{L^2}(k^{-1/3})\big{)}.
\end{equation*}

\subsubsection{Behavior for $\theta$ close to $\theta_\pm$}
Note that the matrix 
$$N(\theta):= (i-y_\pm\cdot \omega_0)^{-1}Id - \delta_\pm(\theta) D_\pm^{-1} M D_\pm^{-1} $$
 is invertible, since the second term is real symmetric, so that it does not have $(i-y_\pm\cdot \omega_0)^{-1}$ in its spectrum. It is then straightforward to check that $\mathcal{M}(\theta)$ is invertible, with inverse
\begin{equation*}
\mathcal{M}(\theta)^{-1} = \begin{pmatrix}
- D_\pm^{-1} N(\theta)^{-1}D_\pm^{-1} &  (i-y_\pm\cdot \omega_0)^{-1} D_\pm^{-1} N(\theta)^{-1} \\
 (i-y_\pm\cdot \omega_0)^{-1} N(\theta)^{-1} D_\pm^{-1} &  (i-y_\pm\cdot \omega_0)^{-1} Id - (i-y_\pm\cdot \omega_0)^{-2}N^{-1}(\theta)  
\end{pmatrix}.
\end{equation*}

Applying Lemma \ref{lem:StrangePhiStat}, we obtain that for $|\theta-\theta_\pm|\leq k^{-1/2+\varepsilon}$, we have

\begin{equation*}
\begin{aligned}
I_\pm(\theta)= -\frac{1}{2} \Big{(}\frac{k}{2\pi}\Big{)}^{5(d-1)/4} \Big{(}\frac{2i \pi}{k}\Big{)}^{d-1} \det(\mathcal{M}(\theta))^{-1/2}   e^{ik \Phi_1^\pm(\theta; \omega_0,\eta_0)}\big{(} -\alpha_\pm + |\alpha_\pm| +O_{k\rightarrow \infty}(k^{-1/3}) \big{)}
\end{aligned}
\end{equation*}

where 
\begin{equation*}
\begin{aligned}
\Phi_1^\pm(\theta; \omega_0,\eta_0)
&= (\omega_0-\theta_\pm) \cdot y_{\pm} + (\theta_\pm-\theta) \cdot y_{\pm}  -\frac{1}{2} \pi_{\theta_\pm^\perp} (\theta-\theta_\pm) \cdot N(\theta)^{-1}\pi_{\theta_\pm^\perp} (\theta-\theta_\pm) + O(|\theta-\theta_\pm|^3).
\end{aligned}
\end{equation*}

Therefore, we have $$\|I_+\|_{L^2} = O(k^{-1/3}),~~I_-(\theta) = \big{(}S_k \phi_{\omega_0,\eta_0}\big{)}(\theta) + O_{L^2}(k^{-1/3}).$$

We therefore have
\begin{equation*}
\big{(}S_k \phi_{\omega_0,\eta_0}\big{)}(\theta) = a_1(\theta)  e^{ik \Phi_1^-(\theta; \omega_0,\eta_0)}+O_{L^2}(k^{-1/3}),
\end{equation*}
where
\begin{equation*}
a_1(\theta) := \Big{(}\frac{k}{2\pi}\Big{)}^{(d-1)/4} \det(\mathcal{M}(\theta))^{-1/2}\in k^{(d-1)/4} S_{0},
\end{equation*}
and $\Phi_1= \Phi_1^-$ satisfies the announced properties, thanks to (\ref{eq:ItissofuntoworkinJessesoffice}).
\end{proof}

\section{Proof of Theorem \ref{th: main theorem}}\label{sec:ProofProposition}

The main ingredient in the proof of Theorem \ref{th: main
  theorem} is a trace formula for powers of $S_k$:
\begin{proposition}\label{thm:trace-lemma}
Suppose that (\ref{eq: hypVol2}) holds. Let $p \in \mathbb{Z}$.  We have
\begin{equation}\label{eq: TracePowers}
   \mbox{Tr} \big{[} S_k^p - Id \Big{]}  = (-1)^p
\Vol(\p \Omega) \omega_{d-1} \Big{(}\frac{k}{2\pi}\Big{)}^{d-1} + o(k^{d - 1} ).
\end{equation}
In particular, for any trigonometric polynomial $P$ vanishing at $1$
and for the measure $\mu_k$ in \eqref{eq:measure}, as $k \to \infty$,
\begin{equation*}
\langle \mu_k, P\rangle =
\frac{\Vol(\p \Omega) \omega_{d-1}}{2\pi} \int_{\Sph^1} P(\theta) \mathrm{d}\theta + o(1).
\end{equation*}
\end{proposition}

Theorem \ref{th: main theorem} can be deduced from Proposition \ref{thm:trace-lemma} in exactly the same way as in \cite[\S 5]{I2016}. We refer the reader to this paper for the argument.

Before proving Proposition \ref{thm:trace-lemma}, we shall prove the following corollary of Proposition \ref{prop:PropagGaussian}.

\begin{corollary}\label{cor:2}
Let $p \in \N$, let $(\omega_0,\eta_0)\in T^*\Sph^{d-1}\backslash G_{\varepsilon_0}$. Let us write $(\omega_p,\eta_p):= \kappa^p(\omega_0,\eta_0)$.
 We have
\begin{equation*}
S^p_k \phi_{\omega_0,\eta_0}(\theta) = A_{1,p}(\theta; \omega_0,\eta_0,k) + A_{2,p}(\theta; \omega_0,\eta_0,k),
\end{equation*}
where for all $\varepsilon>0$, $$\|A_{2,p}(\cdot; \omega_0,\eta_0,k)\|_{L^2}= O_\varepsilon(k^{-1/3+\varepsilon}),$$
and $A_{1,p}(\theta; \omega_0,\eta_0,k)$ is such that we have
\begin{equation*}
\langle \phi_{\omega,\eta}, A_{1,p}(\cdot; \omega_0,\eta_0,k)\rangle = O(k^{-\infty}).
\end{equation*}
 for all $(\omega,\eta) \in T^*\Sph^{d-1}$ satisfying $\mathrm{d}((\omega,\eta),(\omega_p,\eta_p)) > k^{-1/2+\varepsilon}$ for some $\varepsilon>0$.
\end{corollary}
\begin{proof}
We shall prove this result by induction. For $p=1$, the result is an immediate corollary of Proposition \ref{prop:PropagGaussian} and of the non-stationary phase Lemma \ref{lemm: nonstat}. Suppose that we have proven the result for some $p\in \N$, and let us prove it for $p+1$. 
By assumption, we have $S_k^{p+1} \phi_{\omega_0,\eta_0}(\theta) = S_k A_{1,p}(\theta; \omega_0,\eta_0,k) + S_k A_{2,p}(\theta; \omega_0,\eta_0,k).$

By Lemma \ref{lemma: resolution3}, we have
\begin{equation*}A_{1,p}(\theta; \omega_0,\eta_0,k) = c_k \int_{T^*\Sph^{d-1}}\mathrm{d}\omega \mathrm{d}\eta  \phi_{\omega,\eta}(\theta) \langle A_{1,p}(\cdot; \omega_0,\eta_0,k), \phi_{\omega,\eta} \rangle,
\end{equation*}
where $c_k$ is equivalent to some power of $k$.

We therefore have
\begin{equation*}
\begin{aligned}
S_k^{p+1} \phi_{\omega_0,\eta_0}&= c_k \int_{T^*\Sph^{d-1}}\mathrm{d}\omega  \mathrm{d}\eta  S_k\phi_{\omega,\eta} \langle A_{1,p}(\cdot; \omega_0,\eta_0,k), \phi_{\omega,\eta} \rangle + S_k A_{2,p}(\theta; \omega_0,\eta_0,k)\\
&=  c_k \int_{T^*\Sph^{d-1}}\mathrm{d}\omega  \mathrm{d}\eta  A_{1,1}(\cdot ; \omega,\eta,k)\langle A_{1,p}(\cdot; \omega_0,\eta_0,k), \phi_{\omega,\eta} \rangle\\
&+  c_k \int_{T^*\Sph^{d-1}}\mathrm{d}\omega \mathrm{d}\eta   A_{2,1}(\cdot ; \omega,\eta,k) \langle A_{1,p}(\cdot; \omega_0,\eta_0,k), \phi_{\omega,\eta} \rangle + S_k A_{2,p}(\theta; \omega_0,\eta_0,k).
\end{aligned}
\end{equation*}

We shall write
\begin{equation*}
\begin{aligned}
A_{1,p+1}(\theta; \omega_0,\eta_0,k)&:= c_k \int_{T^*\Sph^{d-1}}\mathrm{d}\omega \mathrm{d}\eta  A_{1,1}(\theta ; \omega,\eta,k)\langle A_{1,p}(\cdot; \omega_0,\eta_0,k), \phi_{\omega,\eta} \rangle\\
A_{2,p+1}(\theta; \omega_0,\eta_0,k)&:= c_k \int_{T^*\Sph^{d-1}}\mathrm{d}\omega  \mathrm{d}\eta   A_{2,1}(\theta ; \omega,\eta,k) \langle A_{1,p}(\cdot; \omega_0,\eta_0,k), \phi_{\omega,\eta} \rangle + S_k A_{2,p}(\theta; \omega_0,\eta_0,k).
\end{aligned}
\end{equation*}

In $A_{2,p+1}(\theta; \omega_0,\eta_0,k)$, the term $S_k A_{2,p}(\theta; \omega_0,\eta_0,k)$ is small in $L^2$ norm by recurrence hypothesis, since $S_k$ is unitary. For the other term, we note that the term $\langle A_{1,p}(\cdot; \omega_0,\eta_0,k), \phi_{\omega,\eta} \rangle$ in the integrand is $O(k^{-\infty})$ as soon as $(\omega,\eta)$ is at a distance larger than $k^{-1/2+\varepsilon}$ from $(\omega_p,\eta_p)$. Hence, the integrand is $O(k^{-\infty})$  away from a set of volume $O(k^{1-d+ C\varepsilon})$ for some $C>0$. On this set, the integrand has an $L^2_\theta$ norm bounded by $O(k^{-1/3+\varepsilon})$. Therefore, $\|A_{2,p+1}\|_{L^2} = O(k^{-1/3+C'\varepsilon})$.

As for $A_{1,p+1}$, we have
\begin{equation*}
\begin{aligned}
\langle \phi_{\omega,\eta}, A_{1,p+1}(\theta; \omega_0,\eta_0,k)\rangle
&=  c_k \int_{T^*\Sph^{d-1}}\mathrm{d}\omega' \mathrm{d}\eta'  \langle A_{1,1}(\cdot ; \omega',\eta',k), \phi_{\omega,\eta} \rangle \langle A_{1,p}(\cdot; \omega_0,\eta_0,k), \phi_{\omega',\eta'} \rangle.
\end{aligned}
\end{equation*}

Now, by the induction hypothesis, we know that the above integrand is $O(k^{-\infty})$ unless $\mathrm{d}(\kappa(\omega',\eta'), (\omega_p,\eta_p))=O\big{(} k^{-1/2+\varepsilon}\big{)}$ and $\mathrm{d}((\omega,\eta), (\omega',\eta'))= O\big{(}k^{-1/2+\varepsilon}\big{)}$. The result follows.
\end{proof}

We are now ready to prove Proposition \ref{thm:trace-lemma}.

\begin{proof}[Proof of Proposition \ref{thm:trace-lemma}]
First of all, let us note that it is enough to show the result for $p>0$. Indeed, since $S(k)$ is unitary, we have
\begin{equation*}
\begin{aligned}
\Tr \big{(} S(k)^{-p}-Id\big{)} &= \sum_n \langle e_n, ( S(k)^{-p}-Id) e_n \rangle~~\text{ for any orthonormal basis } (e_n)\\
&= \sum_n \langle (S(k)^p - Id) e_n, e_n \rangle\\
&= \Tr (S(k)^p-Id).
\end{aligned}
\end{equation*}

Therefore, let us fix from now on $p\geq 1$. By (\ref{eq:trace-gaussian-states}), we have 
\begin{equation}\label{eq:decompoTrace}
\begin{aligned}
  \mbox{Tr} \big{[} S_k^p - Id \big{]} &= c_k \int_{\mathcal{P}_1\backslash G_{\varepsilon_0}} \langle \phi_{\omega,\eta}, (S_k^p - Id) \phi_{\omega,\eta} \rangle \mathrm{d}\omega \mathrm{d}\eta+ c_k \int_{G_{\varepsilon_0}} \langle \phi_{\omega,\eta}, (S_k^p - Id) \phi_{\omega,\eta} \rangle \mathrm{d}\omega \mathrm{d}\eta \\
  &+  c_k \int_{\mathcal{I}\backslash G_{\varepsilon_0}} \langle \phi_{\omega,\eta}, (S_k^p - Id) \phi_{\omega,\eta} \rangle \mathrm{d}\omega \mathrm{d}\eta.
  \end{aligned}
  \end{equation}

By (\ref{eq:alph-bound}), the second term in the right-hand side of (\ref{eq:decompoTrace}) is $O(\varepsilon_0 k^{d-1}).$

To deal with the first term, we note that, when computing $\big{(}S_k-Id\big{)} \phi_{\omega_0,\eta_0}(\theta)$, the phase $\Phi$ satisfies $|\partial_{\omega} \Phi| = |\pi_{\omega} (\eta-y)| + |\omega-\omega_0| \geq C \mathrm{d}((\omega,\eta),\mathcal{I})$. Therefore, Lemma \ref{lemm: nonstat} implies that we have
$$\big{(}S_k-Id\big{)} \phi_{\omega_0,\eta_0}(\theta)= O\big{(} \big{(}k \mathrm{d}((\omega_0,\eta_0)\big{)}^{-\infty}\big{)}.$$
Therefore, the first term in the right-hand side of (\ref{eq:decompoTrace}) is $O(k^{-\infty}).$

We now compute
\begin{equation*}
\begin{aligned}
&c_k \int_{\mathcal{I}\backslash G_{\varepsilon_0}} \langle \phi_{\omega,\eta}, S_k^p \phi_{\omega,\eta} \rangle \mathrm{d}\omega \mathrm{d}\eta\\
&= c_k \int_{\mathcal{I}\backslash G_{\varepsilon_0}} \langle \phi_{\omega,\eta}, A_{1,p}(\cdot; \omega,\eta,k)+ A_{2,p}(\cdot; \omega,\eta,k)  \rangle \mathrm{d}\omega \mathrm{d}\eta \\
&=  c_k \int_{\mathcal{I}\backslash G_{\varepsilon_0}} \langle \phi_{\omega,\eta}, A_{1,p}(\cdot; \omega,\eta,k)  \rangle \mathrm{d}\omega \mathrm{d}\eta + O_\varepsilon(k^{d-1-1/3+\varepsilon}).
\end{aligned}
\end{equation*}
Now, by Corollary \ref{cor:2}, the integrand is $O(k^{-\infty})$ as soon as $\mathrm{d}(\kappa^p(\omega,\eta),(\omega,\eta)) > k^{-1/2+\varepsilon}$. But, by (\ref{eq: hypVol2}), the volume of the set of $(\omega,\eta)$ satisfying $\mathrm{d}(\kappa^p(\omega,\eta),(\omega,\eta)) > k^{-1/2+\varepsilon}$ is a $o_{k\rightarrow \infty}(1)$. All in all, we obtain that 
\begin{equation*}
\begin{aligned}
&c_k \int_{\mathcal{I}\backslash G_{\varepsilon_0}} \langle \phi_{\omega,\eta}, S_k^p \phi_{\omega,\eta} \rangle \mathrm{d}\omega \mathrm{d}\eta = o_{k\rightarrow \infty}(k^{d-1})
\end{aligned}
\end{equation*}

Therefore, (\ref{eq:decompoTrace}) gives us 
$$  \mbox{Tr} \big{[} S_k^p - Id \big{]} = c_k \int_{\mathcal{I}\backslash G_{\varepsilon_0}} \langle \phi_{\omega,\eta}, \phi_{\omega,\eta} \rangle \mathrm{d}\omega \mathrm{d}\eta + o_{k\rightarrow \infty}(k^{d-1}),$$
and the result follows since this is true for all $\varepsilon_0>0$.
\end{proof}

\section{Proof of theorem \ref{thm:scattering-phase}} \label{sec:proofs}
We now give our alternative proof of the scattering phase asymptotics
in Theorem \ref{thm:scattering-phase}.  We begin by recalling that the
scattering phase $s(k)$ can be defined continuously in such a way that
$\lim_{k \to 0^+} s(k) = 0$ and thus defined is in fact smooth for all $k >
0$.  We define the `reduced' scattering phase by the sum
$$
s_{2\pi}(k) = -\sum_{e^{i\beta_{k,n}} \in \sigma(S(k))} \beta_{k,n}
$$
where the logarithms of the eigenvalues, the $\beta_{k,n}$ are chosen
to take values in $(-2\pi, 0]$.  For fixed $k$ the eigenvalues
accumulate at $1$ from the bottom half plane and thus contribute
positive values to the sum, which is nonetheless finite.  A result of
Eckmann-Pillet \cite{EP1995} shows that eigenvalues approach $1$ with
positive imaginary part if and only if $k$ approaches a Dirichlet
eigenvalue of $\Omega$.  In fact, with $N_D(k)$ as in
\eqref{eq:counting-function}, we have
$$
s(k) = 2 \pi N_D(k) + s_{2\pi}(k).
$$
Under the hypothesis that the measure of the periodic billiard
trajectories in $\Omega$ is zero, it is known \cite{ivrii1980second}
that 
$$
N_D(k) =  \frac{\omega_d}{(2\pi)^{d}} \Vol(\Omega) k^d -
  \frac{\omega_{d - 1}}{4 (2\pi)^{d-1}}  \Vol(\p \Omega) k^{d -1} + o(k^{d-1}).
$$

 We claim that
$$
s_{2\pi}(k) = s(k) - N_D(k) = \frac{\omega_{d - 1}}{2 (2\pi)^{d-2}}  \Vol(\p \Omega) k^{d -1} + o(k^{d-1}).
$$
We will prove this by breaking up the unit circle into $M \in \mathbb{N}$ sectors of
size $2\pi/M$ estimating the sum defining $s_{2\pi}(k)$ in these
sectors.  Namely, let $A_{M,k}(j) := \{- 2\pi j/M <  \beta_{k,n} \leq - 2
  \pi (j + 1)/M \}$ and $\alpha_{M, k}(j) :=  -\sum_{A_{M, k}(j)} 
  \beta_{M,k}$, so that
$$
s_{2\pi}(k) = \sum_{j = 0}^{M - 1} \alpha_{M, k}(j).
$$
We begin with $j = 0$, which is distinct from $j > 0$ since there are
infinitely many phase shifts in $A_{M, k}(0)$.  We are going to show that
$$
|a_{M, k}(0)| \le \frac{C}{M} k^{d - 1}.
$$

Thanks to equation (2.3) in \cite{christiansen2015sharp}  (which relies on the methods developed in \cite{zworski1989sharp}), we have that there exists $C>0$ independent of $k$ and $n$ such that
\begin{equation}\label{eq:christ}
|e^{i\beta_{k,n}}-1|\leq Ck^d \exp \Big{(}  Ck- \frac{n^{1/(d-1)}}{C}\Big{)}.
\end{equation}

Let us write
\begin{equation*}
B_{M,k}:= \Big{[} \frac{2^{-(j+1)}}{M}; \frac{2^{-j }}{M}\Big{)}
\end{equation*}

Using \eqref{eq:christ} and a constant $C> 0$ whose value
changes from line to line, we see that 
\begin{equation}\label{eq:1}
  \begin{split}
    a_{M, k}(0) &\le \sum_{j = 0}^\infty \sum_{
      |(2\pi)^{-1}\beta_{k,n}|\in B_{M,k}} | e^{i \beta_{k,n}} - 1 | \\
&\le \sum_{j = 0}^\infty  \lp \frac{1}{M} 2^{-j}  \rp  \sum_{
     |(2\pi)^{-1}\beta_{k,n}| \in B_{M,k}} 1 \\
&\le \sum_{j = 0}^\infty  \lp \frac{1}{M} 2^{-j}  \rp C (k + (j +
1)/k)^{d-1} \\
&\le \frac{k^{d-1}}{M} \sum_{j = 0}^\infty  \lp 2^{-j}  \rp C (1 + (j
+ 1)/k^2)^{d-1} \le C\frac{ k^{d-1}}{M}.
  \end{split}
\end{equation}

For $j > 0$, we estimate $\alpha_{M, k}(j)$ from above and below, and clearly
\begin{equation}
  \begin{split}
 \frac{2\pi j}{M} | A_{M, k}(j) |    \le a_{M, k}(j) \le \frac{ 2 \pi (j +
 1)}{M} | A_{M, k}(j) |
  \end{split}
\end{equation}
It follows from the \eqref{eq:number}, since our sectors are size
$2\pi/M$, that for $0 < j \le M -1$, for  any $\delta > 0$ and
$k > k_{M, \delta}$, there is a constant $C > 0$ such that
$$
\lp \frac{\omega_{d-1}}{(2\pi)^{d-1}} \frac{2\pi}{M}
 \Vol(\p \Omega) - \delta \rp  k^{d-1} \le  |A_{M, k}(j)| \le \lp \frac{\omega_{d-1}}{(2\pi)^{d-1}} \frac{2\pi}{M}
 \Vol(\p \Omega) + \delta \rp  k^{d-1}
$$

Since $\sum_{j = 1}^{M-1} (j + 1) =  M(M - 1)/2$
we have
\begin{equation}
\begin{gathered}
  \lp \frac{\omega_{d-1}}{(2\pi)^{d-2}} \frac 12 \Vol(\p \Omega) -
  C(1/M + \delta M) \rp k^{d-1} \le s_{M, k}  \\ s_{M, k} \le \lp
  \frac{\omega_{d-1}}{(2\pi)^{d-2}} \frac 12 \Vol(\p \Omega) + C(1/M +
  \delta M) \rp k^{d-1},
\end{gathered}
\end{equation}
and thus
\begin{equation}
\begin{split}
  \limsup_{k \to \infty} s(k) k^{1-d} &\le
  \frac{\omega_{d-1}}{(2\pi)^{d-2}} \frac 12 \Vol(\p \Omega) + C(1/M +
  \delta M), \\ \liminf_{k \to \infty} s(k) k^{1-d} &\ge
  \frac{\omega_{d-1}}{(2\pi)^{d-2}} \frac 12 \Vol(\p \Omega) - C(1/M +
  \delta M)
\end{split}
\end{equation}
for any $M \in \mathbb{N}, \delta > 0$.  Taking $\delta = 1/M^2$ and sending
$M \to \infty$ gives the result.

\appendix

\section{Symbol classes and stationary phase}\label{sec:Appendix}
Let $X$ be a compact manifold, and let $a=(a_k)_{k>0}$ be a family of $\C$-valued functions in $C^{\infty}(X)$, and let $0\leq\delta<1/2$.
We shall write that $a\in S_{\delta}(X)$ if
\begin{equation}\label{eq:defsymbol}
\forall \beta \in \mathbb{N}^{d-1}, \exists C_{\beta} \text{ such that } \forall x\in X, |\partial^\beta  a(x,k)| \leq C_{\beta}k^{|\beta| \delta}.
\end{equation}

The following lemma follows from \cite[Lemma 7.7.1]{Hvol1}.

\begin{lemma}\label{lemm: nonstat}
Let $\delta<1/2$.
Let $a\in S_{\delta}(X)$, and $\Phi\in S_{0}(X)$. Suppose that there exists $C, \varepsilon>0$ such that for all $x$ in the support of $a_k$, we have $|\partial \Phi_k(x)|\geq C k^{-1/2+\varepsilon}$. We then have
$$\int_X e^{ik \Phi_k(x)} a_k(x) dx = O(k^{-\infty}).$$
\end{lemma}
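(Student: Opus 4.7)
The plan is the standard non-stationary phase integration-by-parts argument; the hypotheses $\delta<1/2$ and $\varepsilon>0$ conspire to ensure that each integration by parts produces a strict gain in powers of $k$.

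First, working in a finite coordinate chart cover of $X$ with a subordinate partition of unity, I would introduce on $\operatorname{supp} a_k$ the first-order operator
$$
L := \frac{1}{ik\,|\partial\Phi_k|^{2}}\sum_{j}(\partial_j\Phi_k)\,\partial_j,
$$
which is well-defined thanks to the lower bound $|\partial\Phi_k|\ge Ck^{-1/2+\varepsilon}$ and satisfies $L(e^{ik\Phi_k})=e^{ik\Phi_k}$. Since $X$ is compact there are no boundary contributions, and iterating integration by parts $N$ times gives
$$
\int_X e^{ik\Phi_k}\,a_k\,dx \;=\; \int_X e^{ik\Phi_k}(L^{t})^{N} a_k\,dx,
$$
where $L^{t}$ denotes the formal transpose (it is enough to write this in each chart).

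Next, I would estimate $\|(L^t)^N a_k\|_{L^\infty}$. Each application of $L^t$ produces a prefactor $1/k$ and differentiates either $a_k$ or the coefficient $b_j:=\partial_j\Phi_k/|\partial\Phi_k|^{2}$. Since $a_k\in S_\delta$, every derivative landing on $a_k$ costs at most a factor $k^\delta$. By repeated application of the chain rule to $|\partial\Phi_k|^{-2}$, together with $\Phi_k\in S_0$ (so all derivatives of $\Phi_k$ are bounded), one verifies inductively that $|\partial^\beta b_j|\le C_\beta k^{(1-2\varepsilon)(|\beta|+1)}$ (the worst case, where all derivatives fall on the factor $|\partial\Phi_k|^{-2}$). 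Tracking these bounds through Leibniz's rule, each application of $L^t$ gives a net loss of at most $k^{-1+\max(\delta,\,1-2\varepsilon)}=k^{-\mu}$ where $\mu:=\min(1-\delta,\,2\varepsilon)>0$ since $\delta<1/2$ and $\varepsilon>0$.

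Iterating yields $\|(L^t)^N a_k\|_{L^\infty}\le C_N k^{-N\mu}$ for every $N\in\mathbb{N}$, so that
$$
\left|\int_X e^{ik\Phi_k}\,a_k\,dx\right|\le \operatorname{Vol}(X)\,C_N k^{-N\mu},
$$
which is $O(k^{-\infty})$ as desired. The main obstacle is the bookkeeping in the second step: keeping track of the proliferation of terms from Leibniz's rule while verifying that the combined growth of derivatives of $b_j$ and $a_k$ is strictly beaten by the $k^{-1}$ gain from each $L^t$. This bookkeeping is precisely the content of Hörmander's Lemma 7.7.1 cited in the statement.
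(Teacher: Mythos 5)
The paper does not prove this lemma; it simply invokes H\"ormander \cite[Lemma 7.7.1]{hormander:vol1}. Your plan to redo the integration-by-parts argument directly is a reasonable route, but your bookkeeping contains a gap that is masked by a second, compensating slip, so the argument as written does not actually close for small $\varepsilon$.

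Your bound $|\partial^\beta b_j|\le C_\beta k^{(1-2\varepsilon)(|\beta|+1)}$ is the crude estimate that treats each derivative of $|\partial\Phi_k|^{-2}$ as producing a fresh factor $|\partial\Phi_k|^{-2}\le Ck^{1-2\varepsilon}$. It is not sharp: since $\Phi_k\in S_0$ one has $\partial_\alpha\bigl(|\partial\Phi_k|^2\bigr)=2\sum_j(\partial_j\Phi_k)(\partial_\alpha\partial_j\Phi_k)=O\bigl(|\partial\Phi_k|\bigr)$, so each derivative actually costs only $|\partial\Phi_k|^{-1}\le Ck^{1/2-\varepsilon}$, and hence $|\partial^\beta b_j|\le C_\beta|\partial\Phi_k|^{-(|\beta|+1)}\le C_\beta k^{(1/2-\varepsilon)(|\beta|+1)}$. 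This sharpening is essential. Your ``net loss $k^{-1+\max(\delta,1-2\varepsilon)}$ per step'' also silently drops the cost of the new \emph{undifferentiated} factor $b_j$ that every application of $L^t$ creates: $(L^t)^N a_k$ is $k^{-N}$ times a sum of terms $\prod_{l=1}^N\partial^{\gamma_l}b_{j_l}\cdot\partial^\beta a_k$ with $\sum_l|\gamma_l|+|\beta|=N$, i.e.\ there are exactly $N$ factors of (derivatives of) $b$. Plugging in your crude bound gives
\[
\bigl\|(L^t)^N a_k\bigr\|_\infty\ \lesssim\ k^{-N}\,\max_{0\le|\beta|\le N}\,k^{(1-2\varepsilon)(2N-|\beta|)+\delta|\beta|}\ =\ k^{(1-4\varepsilon)N}
\]
when $\delta<1-2\varepsilon$, which \emph{diverges} for every $\varepsilon<1/4$; concretely, for $\delta=0.4,\ \varepsilon=0.1$, one application of $L^t$ costs $k^{-1}\cdot k^{1-2\varepsilon}\cdot k^{\max(\delta,1-2\varepsilon)}=k^{0.6}$, a net gain rather than a loss. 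With the sharp bound the exponent becomes $-2\varepsilon N+(\delta-\tfrac12+\varepsilon)|\beta|$, whose worst case over $0\le|\beta|\le N$ is $-\min\bigl(2\varepsilon,\ \tfrac12+\varepsilon-\delta\bigr)N<0$, exactly because $\varepsilon>0$ and $\delta<\tfrac12$; then the argument closes. (H\"ormander's Lemma 7.7.1 packages this precisely via the exponent $|f'(x)|^{|\alpha|-2m}$ in its bound of order $m$ --- the crucial feature being $-2m$ rather than, say, $-3m$, which is what the crude $b_j$ estimate would produce.) A secondary remark: as written, $L$ fixes $e^{ik\Phi_k}$ only when $\Phi_k$ is real, whereas the paper applies this lemma to phases with nonnegative imaginary part; one should put $\overline{\partial_j\Phi_k}$ in the numerator, which changes none of the $b_j$ estimates.
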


The following stationary phase result is a variant over \cite[Theorem 7.7.5]{Hvol1}, but we recall its proof for completeness. Note that we do not assume that the derivative of the phase vanishes at the origin, but only that $\partial \varphi_k(0)= O(k^{-1/2+\varepsilon})$ for all $\varepsilon>0$.

\begin{lemma}\label{lem:StrangePhiStat}
Let  $\varphi\in S_0 (\R^n)$ have a positive imaginary part, and be such that
\begin{itemize}
\item There exists $C>0$ such that for all $\varepsilon>0$, $\|\partial \varphi_k(x)\| = O(k^{-1/2+\varepsilon}) \Leftrightarrow \| x\| = O(k^{-1/2+ C \varepsilon})$.

\item $\partial^2_x\varphi_k(0)$ is invertible for every $k$, and there exists  $c>0$ independent of $k$ such that $$\big{\|}\big{(}\partial^2_x\varphi_k(0)\big{)}^{-1}\big{\|}\leq c.$$
\end{itemize}

Let $a\in S_{1/3}(\R^n)$. Let us write $I(a):= \int_{\R^n} e^{ik \varphi_k(x)} a(x) \mathrm{d}x$. We have$$I(a) = e^{ik \varphi_k(0)} e^{- \frac{ik}{2} \partial \varphi_k(0)\cdot \partial^2 \varphi_k(0)^{-1} \partial \varphi_k(0)} \Big{(}\det \Big{(} \frac{ k \partial^2\varphi_k(0)}{2\pi i}\Big{)}\Big{)}^{-1/2}\big{(} a(\partial^2 \varphi_k(0)^{-1} \partial \varphi_k(0)) + O_{\varepsilon}(k^{-1/3}) \big{)},$$
where the square-root of the determinant is defined as in \cite[\S 3.4]{Hvol1}.
\end{lemma}

In particular, if $a\in S_0(\R^n)$, we have $a_k(\partial^2 \varphi_k(0)^{-1} \partial \varphi_k(0))= a_k(0) + O(k^{-1/3})$, so that we have
$$I(a)= e^{ik \varphi_k(0)} e^{- \frac{ik}{2} \partial \varphi_k(0)\cdot \partial^2 \varphi_k(0)^{-1} \partial \varphi_k(0)}  \Big{(}\det \Big{(} \frac{ k \partial^2\varphi_k(0)}{2\pi i}\Big{)}\Big{)}^{-1/2} \big{(} a_k(0) + O_{\varepsilon}(k^{-1/3})\big{)}.$$

\begin{proof}
Let $\chi\in C_c^\infty(\R^n)$ be such that $\chi(x)=1$ if $|x|<1/2$, and $\varepsilon>0$. Set $\chi_k(x) := \chi ( x k^{1/2-\varepsilon})$. We may write
$$\int_{\R^n} e^{ik \varphi_k(x)} a_k(x) \mathrm{d}x =\int_{\R^n} \chi_k(x) e^{ik \varphi_k(x)} a_k(x) \mathrm{d}x + \int_{\R^n} (1-\chi_k(x)) e^{ik \varphi_k(x)} a(x) \mathrm{d}x.$$

By Lemma \ref{lemm: nonstat} and the assumption we made on $\varphi$, the second integral is $O(k^{-\infty})$.

We have
\begin{equation*}
\chi_k(x) e^{ik \big{(} \varphi_k(x) -  \varphi_k (0) - x\cdot \partial \varphi_k(0) - \frac{1}{2} x\cdot \partial^2 \varphi_k (0) x \big{)}} \in S_{1/2-\varepsilon},
\end{equation*}
and 
\begin{equation}\label{eq:NiceDerivatives}
|x|=o(k^{-1/2+\varepsilon})\Longrightarrow \forall n\in \N ~~ \partial^n \Big{(} \chi_k(x) e^{ik \big{(} \varphi_k(x) -  \varphi_k (0) - x\cdot \partial\varphi_k(0) - \frac{1}{2} x\cdot \partial^2 \varphi_k (0) x \big{)}}\Big{)} = O(k^{(n-1)/2+n\varepsilon)}).
\end{equation}
Indeed, all the derivatives of $\chi_k$ will vanish for such $x$, and, when differentiating $e^{ik \big{(} \varphi_k(x) -  \varphi_k (0) - x\cdot \partial \varphi_k(0) - \frac{1}{2} x\cdot \partial^2 \varphi_k (0) x \big{)}}$ once, we get a factor of size $O(k^{\varepsilon} )$. Differentiating again, each derivation makes the function grow at most of a factor $k^{1/2+\varepsilon}$.

Let us write $b_k(x) := a_k(x) \chi_k(x) e^{ik \big{(} \varphi_k(x) -  \varphi_k (0) - x\cdot\partial \varphi_k(0) - \frac{1}{2} x\cdot \partial^2 \varphi_k (0) x \big{)}}$, so that $b_k \in S_{1/2-\varepsilon}$.
We have

\begin{equation*}
I(a)= e^{ik \varphi_k(0)}\int_{\R^n} e^{ik x\cdot \partial \varphi_k(0)} e^{\frac{ik}{2} x\cdot \partial^2\varphi_k(0) x} b_k(x) \mathrm{d}x. 
\end{equation*}

Let us write $A=A(k)= \partial^2\varphi(0)$, and $\xi_0= \xi_0(k) = \partial\varphi(0)$. We have 

$$\frac{1}{2} x\cdot Ax + x\cdot \xi_0 = \frac{1}{2} (x+A^{-1}\xi_0) \cdot A (x+A^{-1}\xi_0) - \frac{1}{2} \xi_0\cdot A^{-1} \xi_0,$$
so that, setting $y= x+A^{-1}\xi_0$, we have
\begin{equation*}
I(a)= e^{ik \varphi_k(0)} e^{- \frac{ik}{2} \xi_0\cdot A^{-1} \xi_0} \int_{\R^n} e^{\frac{ik}{2} y\cdot A y} b_k(y- A^{-1} \xi_0) \mathrm{d}y. 
\end{equation*}

Writing $\tilde{b}_k(y)= b_k(y-A^{-1}\xi_0)$ and $D=-i \partial$, we therefore have
\begin{equation*}
\begin{aligned}
I(a)&= e^{ik \varphi_k(0)} e^{- \frac{ik}{2} \xi_0\cdot A^{-1} \xi_0} \int_{\R^n} e^{\frac{ik}{2} y\cdot A y} \tilde{b}_k(y) \mathrm{d}y.
\end{aligned}
\end{equation*}

Now, we have
$$e^{\frac{ik}{2} x\cdot A x} =  \Big{(}\det \Big{(} \frac{2\pi k A}{i}\Big{)}\Big{)}^{-1/2}
\mathcal{F} \Big{(}e^{-\frac{i}{2k}\langle \cdot, A^{-1}\cdot\rangle}\Big{)},$$
so that by Plancherel's equality,
\begin{equation*}
I(a)=  e^{ik \varphi_k(0)} e^{- \frac{ik}{2} \xi_0\cdot A^{-1} \xi_0} \Big{(}\det \Big{(} \frac{2\pi k A}{i}\Big{)}\Big{)}^{-1/2} \int_{\R^n} e^{-\frac{i}{2k} \xi \cdot A^{-1}\xi} \mathcal{F}\big{(} \tilde{b}_k \big{)} (\xi) \mathrm{d}\xi.
\end{equation*}

Thanks to \cite[Theorem 7.6.5]{Hvol1}, this quantity is equal to
\begin{equation*}
\begin{aligned}
I(a)=  e^{ik \varphi_k(0)} e^{- \frac{ik}{2} \xi_0\cdot A^{-1} \xi_0} \Big{(}&\det \Big{(} \frac{i k A}{2\pi}\Big{)}\Big{)}^{-1/2} \Big{[} \sum_{j=0}^N \frac{1}{j!}  \Big{(}\Big{(}\frac{i}{2k} D\cdot A^{-1} D\Big{)}^j b_k\Big{)} (-A^{-1}\xi_0) \\
&+ O\Big{(} \Big{(} \frac{\|A\|^{-1}}{k}\Big{)}^N \sum_{|\gamma|\leq \frac{n}{2} +1 +2N} \|D^\gamma b_k\|_{C^0} \Big{)}\Big{]}.
\end{aligned}
\end{equation*}
Since $b_k\in S_{1/2-\varepsilon}$ and $\|A\|^{-1}= O(1)$, the remainder can be made smaller than any power of $k^{-1}$ by taking $N$ large enough.

Using (\ref{eq:NiceDerivatives}), we see that there exists $N\in \mathbb{N}$ such that
\begin{equation*}
I(a)=  e^{ik \varphi(0)} e^{- \frac{ik}{2} \xi_0\cdot A^{-1} \xi_0}  \Big{(}\det \Big{(} \frac{k A}{2\pi  i}\Big{)}\Big{)}^{-1/2} \Big{[} \sum_{j=0}^{N} \frac{1}{j!}  \Big{(}\Big{(}\frac{i}{2k} D\cdot A^{-1} D\Big{)}^j a\Big{)} (-A^{-1}\xi_0) + O\big{(} k^{-1/3}\big{)}\Big{]}.
\end{equation*}

The first term in the sum is $a(-A^{-1}\xi_0)$, and the following ones are $O(k^{-1/3})$. The statement follows.
\end{proof}

\bibliographystyle{alpha}

\end{document}